
\documentclass{amsart}
\usepackage{amsmath,amssymb,amscd,latexsym,verbatim}

\overfullrule=5pt   
\setlength{\baselineskip}{13pt}

\setlength{\paperwidth}{8.5in}
\setlength{\paperheight}{11in}

\setlength{\topmargin}{14mm}
\setlength{\textwidth}{130mm}
\setlength{\textheight}{200mm}
\setlength{\headheight}{8pt}
\setlength{\headsep}{6mm}
\setlength{\footskip}{24pt}

\setlength{\footnotesep}{\ht0}
\setlength{\skip\footins}{7pt plus 3pt}

\overfullrule=5pt   

\usepackage{enumerate}
\usepackage[all]{xy}
\SelectTips{eu}{}
\CompileMatrices



\newcommand{\sst}{\scriptstyle}

\newcommand{\ges}{{\sst\geqslant}}
\newcommand{\les}{{\sst\leqslant}}

\newcommand{\ann}{\operatorname{Ann}}

\newcommand{\col}{\colon}
\newcommand{\dd}{\partial}
\newcommand{\depth}{\operatorname{depth}}
\newcommand{\grade}{\operatorname{grade}}
\newcommand{\EH}[4]{{}^{#1}\!\operatorname{E}_{#2,#3,#4}}
\newcommand{\ED}[4]{{}^{#1}\!\operatorname{d}_{#2,#3,#4}}

\newcommand{\reg}{\operatorname{reg}}
\newcommand{\slope}{\operatorname{slope}}
\newcommand{\rate}{\operatorname{rate}}
\newcommand{\Rate}{\operatorname{Rate}}

\newcommand{\hh}[1]{\operatorname{H}(#1)}
\newcommand{\HH}[2]{\operatorname{H}_{#1}(#2)}

\newcommand{\hra}{\hookrightarrow}

\newcommand{\irr}{\!\scriptscriptstyle{+}}
\newcommand{\Ker}{\operatorname{Ker}}

\newcommand{\lra}{\longrightarrow}

\newcommand{\xra}{\xrightarrow}

\newcommand{\pd}{\operatorname{pd}}
\newcommand{\rank}{\operatorname{rank}}

\newcommand{\Ext}[4]{\operatorname{Ext}^{#1}_{#2}(#3,#4){}}
\newcommand{\Hom}[3]{\operatorname{Hom}_{#1}(#2,#3)}
\newcommand{\Tor}[4]{\operatorname{Tor}_{#1}^{#2}(#3,#4){}}
\newcommand{\topp}[3]{t_{#1}^{#2}(#3){}}

\newcommand{\vf}{{\varphi}}
\newcommand{\wt}{\widetilde}
\newcommand{\tra}{\twoheadrightarrow}

\newcommand{\ini}{\operatorname{in}}

\newcommand{\BN}{{\mathbb N}}
\newcommand{\BR}{{\mathbb R}}
\newcommand{\BZ}{{\mathbb Z}}


\theoremstyle{plain}

\newtheorem{theorem}{Theorem}[section]
\newtheorem{proposition}[theorem]{Proposition}

\newtheorem{corollary}[theorem]{Corollary}
\newtheorem*{mtheorem}{Main Theorem}

\theoremstyle{definition}

\newtheorem{example}[theorem]{Example}
\newtheorem{chunk}[theorem]{}

\theoremstyle{remark}

\newtheorem{question}[theorem]{Question}

\numberwithin{equation}{theorem}

\begin{document}

\title[Free resolutions over Koszul algebras]
{Free resolutions over commutative \\ Koszul algebras}
\author[L.~L.~Avramov]{Luchezar L.~Avramov}
\address{Department of Mathematics,
  University of Nebraska, Lincoln, NE 68588, U.S.A.}
\email{avramov@math.unl.edu}
\author[A.~Conca]{Aldo~Conca}
\address{Dipartimento di Matematica, 
Universit«a di Genova, Via Dodecaneso 35, 
I-16146 Genova, Italy}
\email{conca@dima.unige.it}
\author[S.~B.~Iyengar]{Srikanth B.~Iyengar}
\address{Department of Mathematics,
  University of Nebraska, Lincoln, NE 68588, U.S.A.}
\email{iyengar@math.unl.edu}
\date{\today}
\thanks{Research partly supported by NSF grants DMS 0803082 (LLA)
and 0602498 (SBI)}

\keywords{Koszul algebra, Castelnuovo-Mumford regularity, slope of modules}

\subjclass[2000]{Primary 13D40, 16S37}

\date{\today}

 \begin{abstract} 
For $R=Q/J$ with $Q$ a commutative graded algebra over a field and $J\ne0$, 
we relate the slopes of the minimal resolutions of $R$ over $Q$
and of $k=R/R_{+}$ over $R$.  When $Q$ and $R$ are Koszul and $J_1=0$
we prove $\Tor iQ{R}k_j=0$ for $j>2i\ge0$, and also for $j=2i$ when
$i>\dim Q-\dim R$ and $\pd_QR$ is finite.
 \end{abstract}

\maketitle

Let $K$ be a field and $Q$ a commutative $\BN$-graded $K$-algebra with
$Q_0=K$.  Each graded $Q$-module $M$ with $M_j=0$ for $j\ll0$ has a unique
up to isomorphism minimal graded free resolution, $F^M$.  The module
$F^M_i$ has a basis element in degree $j$ if and only if $\Tor iQkM_j\ne0$
holds,  where $k=Q/Q_{\irr}$ for $Q_{\irr}=\bigoplus_{j\ges1}Q_j$.  Important 
structural information on $F^M$ is encoded in the sequence of numbers
  \[
\topp iQM=\sup\{j\in\BZ\mid\Tor iQkM_j\ne0\}\,.
  \]
It is distilled through the notion of \emph{Castelnuovo-Mumford
regularity}, defined by
  \[
\reg_QM=\sup_{i\ges 0}\{\topp iQM-i\}\,.
  \]
One has $\reg_Qk\ge0$, and equality means that $Q$ is \emph{Koszul};
see, for instance, \cite{PP}.

When the $K$-algebra $Q$ is finitely generated, every finitely genetrated
graded $Q$-module $M$ has finite regularity if and only if $Q$ is a
polynomial ring over some Koszul algebra, see~\cite{AP}; by contrast,
the \emph{slope} of $M$ over $Q$, defined to be the real number
 \[
\slope_{Q}M=\sup_{i\ges 1}\left\{\frac{\topp iQM-\topp0QM}{i}\right\}\,,
 \]
is always finite; see Corollary \ref{cor:rate}.   Following Backelin \cite{Ba},
we set $\Rate Q=\slope_QQ_{\irr}$ and note that one has $\Rate Q\geq 1$,
with equality if and only if $Q$ is Koszul.

\begin{mtheorem}
  \label{thm:main} 
If $Q$ is a finitely generated commutative Koszul $K$-algebra and $J$ a 
homogeneous ideal with $0\ne J\subseteq(Q_{\irr})^2$,  then for $R=Q/J$ 
and $c=\Rate R$ one has
  \begin{enumerate}[\rm(1)]
  \item
$\max\{c,2\}\le\slope_QR\le c+1$, with $c<\slope_QR$ when $\pd_QR$ is finite.
  \item
$\topp iQR=(c+1)\cdot i$ for some $i\ge1$ implies the following conditions:
\newline $\topp hQR=(c+1)\cdot h$ for
$1\le h\le i$ and $i\le\rank_k(J/Q_{\irr}J)_{c+1}$.
  \item
$\topp iQR<(c+1)\cdot i$ holds for all $i>\dim Q-\dim R$ when
$\pd_QR$ is finite.
  \item
$\reg_QR\le c\cdot\pd_QR$; when $Q$ is a standard graded polynomial 
ring, equality holds if and only if $J$ is generated by a $Q$-regular 
sequence of forms of degree $c+1$.
  \end{enumerate}
   \end{mtheorem}

The result is new even in the case of a polynomial ring $Q$, where 
a related statement was initially proved by using Gr\"obner bases;
see \ref{rem:taylor}.

The theorem is proved in Section \ref{sec:koszul}.  Its assertions
have very different underpinnings:  The inequalities in (1) come from
results in homological algebra, established in Section \ref{sec:rate}
with no finiteness or hypotheses on~$Q$.  The remaining statements
are deduced from results about small homomorphism $Q\to R$,  proved
in Section \ref{sec:small} by using delicate properties of commutative
noetherian rings.

Much of the discussion in the body of the paper concerns
the general problem of relating properties of the numbers $\slope_QM$,
$\slope_QR$, and $\slope_RM$, when $Q\to R$ is a homomorphism of graded
$K$-algebras and $M$ is a graded module defined over~$R$.

The essence of our results is a comparison of two types of degrees, ones 
arising from homological considerations, the others induced by internal 
gradings of the objects under study.   In constructions involving two or 
more gradings the index referring to an internal degree always
appears last.  When $y$ is a homogeneous
element of a bigraded object, $|y|$ denotes the \emph{homological
degree} and $\deg(y)$ the \emph{internal degree}.  

The proofs presented below involve various homological constructions that
are well documented in the case of commutative local rings and their
local homomorphisms, but for which graded analogs may be difficult to
find in the literature.  When explicit information on the behavior of
internal degrees is needed, we give the statements in the graded context
with references to sources dealing with the local situation.  We have
verified---and invite readers to follow suit---that in these instances
an internal degree can be factored in all the arguments involved.

\section{Slopes of graded modules}
  \label{sec:rate}

In this section $\vf\col Q\to R$ is a surjective homomorphism of 
graded $K$-algebras, and $M$ is a graded $R$-module with 
$M_j=0$ for all $j\ll0$; we set $J=\Ker\vf$. 

We recall a classical change-of-rings spectral sequence of Cartan and
Eilenberg. 

  \begin{chunk}
    \label{ce}
By \cite[Ch.\,XVI, \S5]{CE}, there exists a spectral sequence of trigraded $k$-vector 
spaces
  \begin{equation}
    \label{eq:cesequence}
\EH rpqj\underset{p}{\implies}\Tor{p+q}QkM_j \quad\text{for}\quad r\ge2\,,
  \end{equation}
with differentials acting according to the pattern
  \begin{equation}
    \label{eq:cedifferential}
\ED rpqj\col \EH rpqj\to\EH r{p-r}{q+r-1}j \quad\text{for}\quad r\ge2\,,
  \end{equation}
with second page of the form
  \begin{equation}
    \label{eq:ceE2}
\EH 2pqj\cong \bigoplus_{j_1+j_2=j}\Tor pRkM_{j_1}\otimes_{k}\Tor qQkR_{j_2}\,,
  \end{equation}
and with edge homomorphisms
  \begin{equation}
    \label{eq:ceedge}
\Tor iQkM_{j}\tra\EH{\infty}i0j=\EH{i+1}i0j\hra\EH 2i0j\cong\Tor iRkM_{j}
  \end{equation}
equal to the canonical homomorphisms of $k$-vector spaces
  \begin{equation}
    \label{eq:cechange}
\Tor i{\vf}kM_j\col\Tor iQkM_j\to\Tor iRkM_j\,.
  \end{equation}
  \end{chunk}

For all $r$, $p$, and $q$ we set $\sup\EH rpq*=\sup\{j\in\BZ\mid \EH rpqj\ne0\}$.

The proof of the next result is based on an analysis of the convergence
of the preceding change-of-rings spectral sequence on the line $q=0$.

\begin{proposition}
\label{thm:ceub}
When $J\ne QJ_1$ holds there are inequalities
\[
\slope_{R}M \leq \max\left\{\slope_{Q}M\,, 
     \sup_{i\ges 1}\left\{\frac{\topp iQR-1}{i}\right\}\right\}
\leq\max\{\slope_{Q}M,\slope_QR\}\,.
\]
\end{proposition}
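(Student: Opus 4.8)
The plan is to extract both bounds from the Cartan--Eilenberg spectral sequence \eqref{eq:cesequence}, restricted to the line $q=0$. Since $\vf$ is a graded surjection, $\Tor 0QkR\cong R/Q_{\irr}R=k$ is concentrated in internal degree $0$, so \eqref{eq:ceE2} degenerates to $\EH 2p0j\cong\Tor pRkM_j$ for all $p$ and $j$, while \eqref{eq:ceedge} presents $\EH\infty i0j=\EH{i+1}i0j$ as a quotient of $\Tor iQkM_j$. The second inequality is then immediate: since $\topp 0QR=0$, for every $i\ge1$ one has $(\topp iQR-1)/i\le\topp iQR/i=(\topp iQR-\topp 0QR)/i\le\slope_QR$. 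For the first inequality, write $s$ for the middle term of the displayed chain; one may assume $s<\infty$. I will use two elementary facts. First, $\topp 0QM=\topp 0RM$, because $\Tor 0QkM\cong M/Q_{\irr}M=M/R_{\irr}M\cong\Tor 0RkM$. Second, $s\ge1$: the hypothesis $J\ne QJ_1$ means $J$ is not generated by linear forms, hence has a minimal generator of degree $\ge2$, so $\topp 1QR\ge2$ and $s\ge\topp 1QR-1\ge1$.

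The main step is the estimate $\topp nRM\le\topp 0RM+sn$ for all $n\ge0$, proved by induction on $n$; the case $n=0$ is the first fact above. Fix $n\ge1$ and an internal degree $j>\topp 0RM+sn$; it suffices to show $\EH 2n0j=0$, that is, $\Tor nRkM_j=0$. No differential \eqref{eq:cedifferential} lands on the line $q=0$ (its source would lie in negative $q$), so $\EH{r+1}n0j=\Ker\ED rn0j$ for all $r\ge2$; hence $\EH 2n0j=\EH\infty n0j$ as soon as $\ED rn0j=0$ for $2\le r\le n$. Each such differential has target $\EH r{n-r}{r-1}j$, a subquotient of $\EH 2{n-r}{r-1}j$, and by \eqref{eq:ceE2} the latter is nonzero only if $\topp{n-r}RM+\topp{r-1}QR\ge j$. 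Now $n-r<n$ makes the inductive bound $\topp{n-r}RM\le\topp 0RM+s(n-r)$ available, and $r-1\ge1$ together with the definition of $s$ gives $\topp{r-1}QR\le1+s(r-1)$; summing and using $s\ge1$,
\[
\topp{n-r}RM+\topp{r-1}QR\le\topp 0RM+s(n-1)+1\le\topp 0RM+sn<j\,,
\]
so $\EH 2{n-r}{r-1}j=0$ and the differential vanishes. Thus $\EH 2n0j=\EH\infty n0j$, and the latter is a quotient of $\Tor nQkM_j$, which vanishes because $j>\topp 0RM+sn=\topp 0QM+sn\ge\topp 0QM+n\slope_QM\ge\topp nQM$. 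Hence $\Tor nRkM_j=0$ for every $j>\topp 0RM+sn$, which is the desired bound and closes the induction. Taking the supremum over $n\ge1$ yields $\slope_RM\le s$.

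The one genuine obstacle is the circularity in the estimate for the differentials $\ED rn0j$: the target involves $\topp{n-r}RM$, a quantity of exactly the kind being bounded. This is precisely what forces the whole computation to be organized as an induction on homological degree. The inductive arithmetic then closes only because $s\ge1$ --- that inequality is what allows $s(n-1)+1$ to be absorbed into $sn$ --- and establishing $s\ge1$ is the single place where the hypothesis $J\ne QJ_1$ intervenes.
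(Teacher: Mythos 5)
Your proof is correct and uses the same change-of-rings spectral sequence on the line $q=0$, the same $E^2$ identification, the same edge homomorphism, and the same arithmetic (including the crucial use of $s\ge1$, which is where $J\ne QJ_1$ enters). The only difference from the paper is organizational: instead of a primary induction on homological degree together with a secondary descending induction on $r$ bounding $\sup\EH ri0*$, you run a single induction and observe directly that for $j$ beyond the claimed bound all the differentials $\ED rn0j$ vanish because their targets' $E^2$ terms vanish, so $\EH 2n0j=\EH\infty n0j$ and the edge map finishes --- a mild but genuine streamlining of the same argument.
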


 \begin{proof}
If $\topp iQR$ or $\topp  iQM$ is infinite for some $i\ge0$, then so are
both maxima above, hence there is nothing to prove.  Thus, we may assume
that $\topp iQR$ and $\topp  iQM$ are finite for every $i\ge0$; in this
case the second inequality is clear.  Let $m$ denote the middle term in 
the inequalities above.  Using the equality $\topp 0QM=\topp 0RM$, 
we get
  \begin{align}
   \tag*{(\ref{thm:ceub}.1)${}_{i}$}
\topp iQM&\leq mi+\topp 0RM\,;
\\
   \tag*{(\ref{thm:ceub}.2)${}_{i}$}
\topp iQR&\leq mi+1\,.
  \end{align}

For $i\ge0$ and $r\geq 2$, from fomulas \eqref{eq:cedifferential} and
\eqref{eq:ceE2} one gets exact sequences
  \begin{equation}
    \label{eq:celimit}
0\lra\EH{r+1}i0j\lra\EH ri0j\xra{\ \ED ri0j \ }\EH r{i-r}{r-1}j \,.
  \tag{\ref{thm:ceub}.3}
  \end{equation}
We set up a primary induction on $i$ and a secondary, descending one,
on $r$ to prove 
 \begin{align}
   \tag*{(\ref{thm:ceub}.4)${}_{i,r}$}
\sup \EH ri0* &\le mi + \topp 0RM
\quad\text{and}\quad i+1\ge r\ge2\,.
 \end{align}
In view of \eqref{eq:ceE2}, the validity of {(\ref{thm:ceub}.4)${}_{i,2}$}
is the assertion of the proposition.

The basis of the primary induction, for $i=1$, comes from
\eqref{eq:ceedge} and (\ref{thm:ceub}.1)${}_{1}$.

Fix an integer $i\ge2$ and assume that (\ref{thm:ceub}.4)${}_{i',r}$ holds
for $i'<i$.  Formulas \eqref{eq:ceedge} and (\ref{thm:ceub}.1)${}_{i}$
imply (\ref{thm:ceub}.4)${}_{i,i+1}$.  Fix $r\in[2,i]$ and assume that
(\ref{thm:ceub}.4)${}_{i,r'}$ holds for $i+1\ge r'>r$. The first relation
in the following chain
  \begin{align*}
\sup \EH{r}i0* 
  &\leq \max\{\sup \EH {r+1}i0* \,, \sup \EH {r}{i-r}{r-1}* \}\\
  &\leq \max\{mi+\topp 0RM \,, \sup \EH {r}{i-r}{r-1}* \}\\
  &\leq \max\{mi+\topp 0RM \,, \sup \EH {2}{i-r}{r-1}* \}\\
  &= \max\{mi+\topp 0RM \,, \topp{i-r}RM+\topp{r-1}QR \}\\
  &\leq \max\{mi+\topp 0RM \,,(m(i-r)+\topp 0RM) + (m(r-1)+1)\}\\
  & = \max\{mi+\topp 0RM \,, mi+\topp 0RM-(m-1)\}\\
  &\leq mi + \topp 0RM
    \end{align*}
comes from the exact sequence \eqref{eq:celimit}.  The second one holds
by (\ref{thm:ceub}.4)${}_{i,r+1}$, the third because $\EH{r}{i-r}{r-1}*$
is a subfactor of $\EH 2{i-r}{r-1}*$, the fourth by \eqref{eq:ceE2}, the
fifth by (\ref{thm:ceub}.4)${}_{i-r,2}$ and (\ref{thm:ceub}.2)${}_{r-1}$,
and the last one because $J\ne QJ_1$ implies $m\geq1$.

This completes the inductive proof of the inequality (\ref{thm:ceub}.4)${}_{i,r}$.
  \end{proof}

Variants of the proposition have been known for some time, at
least when $M$ is finitely generated and $R$ is \emph{standard 
graded}; that is, $R=K[R_1]$ with $\rank_KR_1$ finite.  Thus, 
Aramova, B\u arc\u anescu, and Herzog in \cite[1.3]{ABH} 
established the corresponding result for a related invariant, 
$\rate_RM=\sup_{i\ges1}\{\topp iQM/i\}$.  They used the same 
spectral sequence, extending an argument of Avramov for
$M=k$, see \cite[p.~97]{Ba}; in the latter case, the corollary below
was first proved by Anick in \cite[4.2]{An}. 

  \begin{corollary}
  \label{cor:rate} 
If $R$ is finitely generated over $K$, then for every finitely
generated $R$-module $M$ one has $\slope_RM<\infty$.
  \end{corollary}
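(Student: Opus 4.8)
The plan is to present $R$ as a quotient of a polynomial ring so that Proposition \ref{thm:ceub} applies, and then to invoke Hilbert's Syzygy Theorem to control what happens over the polynomial ring. We may assume $M\ne0$, since otherwise $\slope_RM=-\infty$. First I would choose a \emph{minimal} homogeneous generating set $y_1,\dots,y_n$ of the $K$-algebra $R$, so that the residue classes of the $y_i$ form a basis of the graded $k$-vector space $R_{\irr}/(R_{\irr})^2$. Let $Q=K[x_1,\dots,x_n]$ be the polynomial ring graded by $\deg(x_i)=\deg(y_i)$, let $\vf\colon Q\to R$ be the surjection of graded $K$-algebras with $\vf(x_i)=y_i$, and set $J=\Ker\vf$. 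Because $\vf$ sends the $x_i$ to a minimal generating set of $R$, the induced map $Q_{\irr}/(Q_{\irr})^2\to R_{\irr}/(R_{\irr})^2$ is a surjection of graded $k$-vector spaces that is bijective in each degree, hence an isomorphism; therefore $J\subseteq(Q_{\irr})^2$, and in particular $J_1=0$.

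If $J=0$, then $\vf$ is an isomorphism and the conclusion follows from the finiteness statement proved in the next paragraph, applied to $M$ over $Q$. Otherwise $J\ne0=QJ_1$, so Proposition \ref{thm:ceub} applies and yields
\[
\slope_RM\;\le\;\max\{\slope_QM,\ \slope_QR\}\,.
\]
Thus it suffices to show that both slopes on the right-hand side are finite.

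For this I would use that $Q$ is a polynomial ring in $n$ variables over $K$, hence a Noetherian graded ring of finite global dimension $n$ (Hilbert's Syzygy Theorem). Consequently every finitely generated graded $Q$-module $N$ has a minimal graded free resolution by finitely generated free modules that stops in homological degree at most $n$, so $\topp iQN$ is a finite integer for $0\le i\le n$ and equals $-\infty$ for $i>n$; hence $\slope_QN<\infty$ whenever $N\ne0$. Now $R=Q/J$ is cyclic, hence a nonzero finitely generated $Q$-module, and $M$ is finitely generated over $R$, hence over $Q$; applying the previous sentence to $N=R$ and to $N=M$ gives $\slope_QR<\infty$ and $\slope_QM<\infty$, which completes the proof.

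There is no deep obstacle here; the one point that genuinely requires care — and the reason the argument goes through — is the choice of a \emph{minimal} algebra generating set for $R$, which is exactly what forces $J\subseteq(Q_{\irr})^2$ and so makes the hypothesis $J\ne QJ_1$ of Proposition \ref{thm:ceub} available, outside the trivial case $R\cong Q$. The remaining ingredients (Hilbert's Syzygy Theorem, and the degenerate cases $M=0$ and $J=0$) are routine.
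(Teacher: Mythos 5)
Your proof is correct and follows essentially the same route as the paper: present $R$ as a quotient of a polynomial ring $Q$, reduce to finiteness of $\slope_QR$ and $\slope_QM$ via Proposition~\ref{thm:ceub}, and conclude with the finiteness of minimal free resolutions over the polynomial ring. You are more careful than the published proof in one respect worth noting: by insisting on a \emph{minimal} set of algebra generators you guarantee $J\subseteq(Q_{\irr})^2$, hence $J_1=0$, which (outside the trivial case $J=0$) makes the hypothesis $J\ne QJ_1$ of Proposition~\ref{thm:ceub} hold — a point the paper's one-line proof leaves to the reader.
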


  \begin{proof}
One may choose $Q$ to be a polynomial ring in finitely many indeterminates
over $K$.  In this case $\Tor iQkR_*$ and $\Tor iQkM_*$ are finitely generated 
over $k$ for each $i\ge0$ and are zero for almost all $i$, so $\slope_QR$ and
$\slope_QM$ are finite. 
  \end{proof}

In the proof of the next result we again use the spectral sequence
in \ref{ce}, this time analyzing its convergence on the line $p=0$.
The hypothesis includes a condition on the maps $\Tor i{\vf}kM_j$;
see \ref{ch:small} and Proposition \ref{prop:koszul_small} for 
situations where it is met.

\begin{proposition}
\label{thm:celb}
If $M\ne0$ and $\Tor i{\vf}kM$ is injective for each $i$, then one has
  \begin{align*}
 \slope_{Q}R &\leq 1+ s
 \quad\text{where}\quad
s=\sup_{i\ges 2}\left\{\frac{\topp iRM - \topp 0RM-1}{i-1}\right\}\,.
  \end{align*}
 \end{proposition}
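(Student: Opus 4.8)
The plan is to study the convergence of the spectral sequence from~\ref{ce} along the line $p=0$, in a manner parallel to the proof of Proposition~\ref{thm:ceub}, which worked along $q=0$. Throughout, write $d=\topp 0RM$ and $b_q=\topp qQR$. Since $R$ is a cyclic $Q$-module, $b_0=0$ and $\slope_QR=\sup_{q\ge1}\{b_q/q\}$, so it is enough to prove $b_q\le(1+s)q$ for every $q\ge1$. As in the opening of the proof of Proposition~\ref{thm:ceub} one first disposes of the degenerate cases: if $s=\infty$, or if $\Tor qQkR=0$ (so that $b_q=-\infty$), there is nothing to prove, and one checks that the numerical estimate below, run with arbitrarily large internal degree, rules out the possibility that any $b_q$ is infinite; so I may assume that $s$, $d$, and all the $b_q$ are finite.

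First I would show that the hypothesis forces $\EH{\infty}{0}{q}{j}=0$ for all $q\ge1$ and all $j$. Fix $n\ge1$ and $j$, and consider the filtration $0=F_{-1}\subseteq\dots\subseteq F_n=\Tor nQkM_j$ of the abutment, in which $F_p/F_{p-1}\cong\EH{\infty}{p}{n-p}{j}$. The induced surjection $\Tor nQkM_j\tra F_n/F_{n-1}=\EH{\infty}{n}{0}{j}$ is the first factor of $\Tor n{\vf}kM_j$ in the factorization~\eqref{eq:ceedge}--\eqref{eq:cechange}; since $\Tor n{\vf}kM_j$ is injective, this surjection is injective, hence an isomorphism, so $F_{n-1}=0$. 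Therefore $\EH{\infty}{p}{n-p}{j}=0$ for all $p\le n-1$; taking $p=0$ and letting $n$ range over $n\ge1$ gives $\EH{\infty}{0}{q}{j}=0$ for every $q\ge1$.

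Next I would exploit the shape of the differentials along $p=0$. By~\eqref{eq:cedifferential} the differential $\ED{r}{0}{q}{j}$ has target $\EH{r}{-r}{q+r-1}{j}=0$, so on this line every page is a quotient of the previous one: $\EH{r+1}{0}{q}{j}=\operatorname{coker}\big(\ED{r}{r}{q-r+1}{j}\big)$, where $\ED{r}{r}{q-r+1}{j}\colon\EH{r}{r}{q-r+1}{j}\to\EH{r}{0}{q}{j}$ and $2\le r\le q+1$. Combining this with the vanishing just proved: if $q\ge1$ and $\EH{2}{0}{q}{j}\ne0$, then some such differential must be nonzero, so $\EH{r}{r}{q-r+1}{j}\ne0$, whence $\EH{2}{r}{q-r+1}{j}\ne0$ because the former is a subquotient of the latter; by~\eqref{eq:ceE2} there then exist $j_1+j_2=j$ with $\Tor rRkM_{j_1}\ne0$ and $\Tor{q-r+1}QkR_{j_2}\ne0$, so that $j\le\topp rRM+b_{q-r+1}$ for some $r\in[2,q+1]$. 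I would apply this with $j=d+b_q$, which is legitimate because $\EH{2}{0}{q}{d+b_q}$ contains the nonzero summand $\Tor 0RkM_d\otimes_k\Tor qQkR_{b_q}$, and then induct on $q\ge1$. Since $r\ge2$, the definition of $s$ gives $\topp rRM\le d+1+s(r-1)$; since $q-r+1<q$ (and $b_0=0$) the induction hypothesis gives $b_{q-r+1}\le(1+s)(q-r+1)$. Writing $m=1+s$ and using the identity $(m-1)(r-1)+m(q-r+1)=mq-(r-1)$, one obtains $d+b_q\le d+1+mq-(r-1)\le d+mq$, i.e.\ $b_q\le(1+s)q$, as required.

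The crux of the argument is the first step: injectivity of the edge homomorphism makes the abutment filtration collapse onto its top quotient, which is precisely what forces $\EH{\infty}{0}{q}{j}=0$ for $q\ge1$. Once that is in hand, the passage back to the second page along $p=0$ and the ensuing numerical estimate are routine, patterned on Proposition~\ref{thm:ceub}; the only other point needing a little care is the reduction to finite invariants noted at the start.
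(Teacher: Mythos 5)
Your proof is correct, and it takes the same overall route as the paper---the Cartan--Eilenberg spectral sequence along the line $p=0$, the edge vanishing $\EH{\infty}0q*=0$ for $q\ge1$ forced by injectivity of $\Tor{}{\vf}kM$, and a primary induction on $q$---but the way you extract the numerical bound from the spectral sequence is genuinely different. The paper runs a secondary \emph{descending} induction on the page number $r$, establishing $\sup\EH r0i*\le(s+1)i+\topp 0RM$ for every $r$ from $i+2$ down to $2$, starting from $\sup\EH{i+2}0i*=\sup\EH{\infty}0i*$. You instead observe that the chain of surjections $\EH 20qj\tra\EH 30qj\tra\cdots\tra\EH{\infty}0qj=0$ must have a nonzero kernel somewhere, so some $\ED rr{q-r+1}j$ is nonzero; feeding in the specific degree $j=d+b_q$ (where $\EH 20q*$ is known to be nonzero) then yields $d+b_q\le\topp rRM+b_{q-r+1}$ for some $r\in[2,q+1]$ directly. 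This ``witness differential'' step replaces the paper's entire secondary induction and also dispenses with tracking $\sup\EH r0q*$ for intermediate pages. The two arguments encode the same information---unrolling the paper's recursion $\sup\EH r0i*\le\max\{\sup\EH{r+1}0i*,\sup\EH rr{i-r+1}*\}$ recovers your inequality---but your presentation is shorter and more conceptual, at the cost of being a bit more terse on the finiteness reductions, which you do handle correctly in the end by running the estimate at arbitrarily large internal degree.
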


\begin{proof}
The hypothesis implies $\topp 0RM>-\infty$. There is nothing 
to prove if $\topp iQM=\infty$ for some $i$, so we assume that 
$\topp  iQM$ is finite for all $i\ge0$.   By the definition of the number 
$s$, the following inequalities then hold: 
  \begin{equation}
    \label{eq:sup}
     \tag*{(\ref{thm:celb}.1)${}_{i}$}
\topp iRM \leq s(i-1)+1+\topp 0RM\quad \text{for all}\quad i\ge 2\,.
  \end{equation}

It follows from \eqref{eq:cedifferential} and \eqref{eq:ceE2} that for 
$r\geq 2$ there exist exact sequences
  \begin{equation} \label{eq:cecolimit}
\EH rr{i-r+1}j\xra{\ \ED rr{i-r+1}j \ }\EH r0ij\lra\EH {r+1}0ij\lra0
    \tag*{(\ref{thm:celb}.2)}
  \end{equation}

By primary induction on $i$ and secondary, descending induction
on $r$, we prove
  \begin{align}
     \tag*{(\ref{thm:celb}.3)${}_{i,r}$}
\sup \EH r0i*& \leq (s+1)i + \topp 0RM
\quad\text{for}\quad i+2\ge r\ge2\,.
  \end{align}
In view of \eqref{eq:ceE2}, the validity of {(\ref{thm:celb}.3)${}_{i,2}$}
yields the assertion of the proposition.

The injectivity of $\Tor{}{\vf}kM$ and \eqref{eq:ceedge} imply
$\EH{\infty}pq*=0$ for $q\ge 1$ and all $p$.  It follows from \eqref{eq:cedifferential}
and \eqref{eq:ceE2} that $\EH {n+2}0i*$ is isomorphic to $\Tor 0RkM_*$
for $i=0$ and to $0$ for $i\ge1$, so {(\ref{thm:celb}.3)${}_{i,i+2}$}
holds for all $i\ge0$.  This gives the basis of the primary induction
for $i=0$ and that of the secondary induction for all $i\ge1$.

Fix an integer $i\ge1$ and assume that (\ref{thm:celb}.3)${}_{i',r'}$
holds for all pairs $(i',r')$ with $i'<i$ and $i+2\ge r'>r$.  One then
has a chain of relations
  \begin{align*}
\sup \EH rr{i-r+1}* 
   &\leq \sup \EH 2r{i-r+1}* \\
   & = \topp rRM + \topp{i-r+1}QR\\
   &\leq \topp rRM +  (s+1)(i-r+1)\\
   &\leq s(r-1)+1 + \topp 0RM +  (s+1)(i-r+1) \\
   &= (s+1)i + (2-r) + \topp 0RM\\
   &\leq (s+1)i + \topp 0RM\,,
\end{align*}
where the first one holds because $\EH rr{i-r+1}* $ is a subfactor
of $\EH 2r{i-r+1}*$, the second by formula \eqref{eq:ceE2},
the third by (\ref{thm:celb}.3)${}_{i-r+2,2}$ and \eqref{eq:ceE2}, and the fourth by
(\ref{thm:celb}.1)${}_{r}$.  The exact sequence \ref{eq:cecolimit},
the preceding inequalities, and (\ref{thm:celb}.3)${}_{i,r+1}$ give
 \begin{align*}
\sup \EH r0i*
   &\leq\max\{\sup\EH{r+1}0i*\,,\sup\EH rr{i-r+1}*\}
  \\ &\leq (s+1)i + \topp 0RM\,.
 \end{align*}

Hereby, the inductive proof of the inequality (\ref{thm:celb}.3)${}_{i,r}$ 
is complete.
  \end{proof}

\section{Regular elements}
  \label{sec:reg}

Not surprisingly, the bounds obtained in the preceding section can be 
sharpened in cases when the minimal free resolution of $R$ or of $M$ 
over $Q$ is particularly simple.  

In this section we discuss a classical avatar of this phenomenon.

  \begin{proposition}
 \label{thm:reg}
If $R=Q/(f)$ for a non-zero divisor $f\in Q_{\irr}$, then one has: 
   \begin{alignat}{3}
     \tag{1}
\slope_QM&\le\max\{\slope_RM,\deg(f)\}
&\quad&\text{with equality for } \quad &f&\notin (Q_{\irr})^2\,.
  \\
     \tag{2}
\slope_RM&\le\max\{\slope_QM,\deg(f)/2\}
&\quad&\text{with equality for} \quad &f&\in Q_{\irr}\ann_QM\,.
   \end{alignat}
 \end{proposition}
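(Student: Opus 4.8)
The plan is to use the fact that when $R=Q/(f)$ with $f$ a non-zero divisor of degree $d=\deg(f)$, the minimal free resolution of $R$ over $Q$ is the two-term complex $0\to Q(-d)\xra{f}Q\to R\to 0$, so that $\Tor 0QkR=k$, $\Tor 1QkR=k(-d)$, and $\Tor iQkR=0$ for $i\ge2$. Consequently $\topp 0QR=0$, $\topp 1QR=d$, and $\topp iQR=-\infty$ for $i\ge2$. Feeding this into Proposition~\ref{thm:ceub} gives, since $J=(f)$ satisfies $J\ne QJ_1$ exactly when $f\notin Q_1\cdot Q_{\irr}$ — and in the remaining case $\deg f=1$ so the spectral-sequence argument can be run after a degree-one change of variables, or one argues directly — the bound $\slope_RM\le\max\{\slope_QM,d-1,\, (d-1)/2,\ldots\}=\max\{\slope_QM,d-1\}$. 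That is not yet the claimed $\max\{\slope_QM,d/2\}$, so the genuine content of part~(2) is a sharper analysis: the Cartan--Eilenberg spectral sequence now has $\EH 2pq*$ concentrated on the two columns $q=0,1$, with $\EH 2pq*=\topp pRM$ for $q=0$ and $\EH 2p1*=\topp pRM+d$ for $q=1$, and the only nonzero differentials are $\ED 2p1*\col\EH 2p1*\to\EH 2{p-2}0*$. So there is a four-term exact sequence relating $\Tor iQkM$ to $\Tor iRkM$, $\Tor{i-1}RkM$ shifted by $d$, and the cokernel/kernel of that single differential, and one reads off $\topp iQM\le\max\{\topp iRM,\ \topp{i-2}RM+d\}$. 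Iterating this recursion yields $\slope_QM\le\max\{\slope_RM,d/2\}$ for part~(1), and a symmetric bookkeeping of the same sequence (solving for $\topp iRM$ in terms of $\topp iQM$ and $\topp{i-2}QM$) gives part~(2).

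The equality assertions I would handle separately, by exhibiting the extremal behaviour directly in the resolution. For part~(1), when $f\notin(Q_{\irr})^2$ one can (after extending $K$ if necessary) complete $f$ to a minimal generating set, so $R=Q'/(\text{lower-degree relations})$ with $Q'$ obtained from $Q$ by killing $f$; the point is that $f$ appears nontrivially among the linear syzygies and forces $\topp 2QM\ge d+\topp 0RM$ whenever $M\ne0$, which combined with the inequality gives equality of slopes — here I would use that $M\ne0$ implies $\topp 0RM$ is finite and that tensoring the Koszul-type relation $f\cdot(\ )$ against a minimal resolution of $M$ over $R$ produces a generator in the right degree. For part~(2), when $f\in Q_{\irr}\ann_QM$, i.e.\ $f=g\cdot h$ with $h\in\ann_QM$ and $g\in Q_{\irr}$, or more to the point when $fM=0$, the Shamash/Eisenbud construction of the $R$-free resolution of $M$ from the $Q$-free resolution shows explicitly that a degree-$d$ periodicity operator enters, producing classes in $\Tor iRkM$ of internal degree $\topp 0QM + (d/2)i$ roughly; pinning down that these classes are nonzero — the periodicity operator is injective on the relevant Tor because $f$ is a non-zero divisor on $Q$ — gives $\slope_RM\ge d/2$ and hence equality.

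I expect the main obstacle to be the case analysis around the hypothesis $J\ne QJ_1$ in Proposition~\ref{thm:ceub}: when $\deg(f)=1$ that proposition does not directly apply, yet part~(1) still claims equality $\slope_QM=\max\{\slope_RM,1\}$ and part~(2) claims $\slope_RM\le\max\{\slope_QM,1/2\}=\slope_QM$ (since slopes are $\ge1$ once there is anything in positive homological degree). The degree-one case should actually be easy — killing a linear non-zero divisor changes neither $\topp iRM$ versus $\topp iQM$ in a way that affects the slope, because the resolutions differ only by the trivial Koszul factor on a variable — but making this precise requires care with the internal grading, exactly the sort of "an internal degree can be factored through all arguments" verification the introduction warns about. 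The second, more technical obstacle is the sharp equality in part~(2): translating "$f$ kills $M$" into a concrete nonzero cycle in $\Tor RkM$ of the extremal degree is the one place a genuine construction (Shamash resolution, or the Eisenbud operator on $\Ext$/$\Tor$) is needed rather than a spectral-sequence estimate, and one must check the operator is injective precisely where it is applied.
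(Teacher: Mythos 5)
Your overall strategy — use the two-term $Q$-free resolution of $R$ to collapse the Cartan--Eilenberg spectral sequence of \ref{ce} to a long exact sequence, and read slope bounds off that sequence — is exactly the paper's route, and the identification of Nagata-type vanishing (for part (1)) and Shamash-type operators (for part (2)) as the drivers of the equality cases is also on target. But the core inequalities you wrote down are garbled, and the garbling is not cosmetic: it swaps which part of the statement gets the bound $d$ and which gets $d/2$.

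The exact sequence coming from the two-column spectral sequence has the shape
\[
\Tor{i+1}RkM_{j}\xra{\delta}\Tor{i-1}RkM_{j-d}\lra\Tor{i}QkM_{j}\lra\Tor{i}RkM_{j}\xra{\delta}\Tor{i-2}RkM_{j-d}\,.
\]
The middle three terms give $\topp iQM\le\max\{\topp iRM,\,\topp{i-1}RM+d\}$ (internal shift $d$, homological shift $1$), not $\topp{i-2}RM+d$ as you wrote; this is \emph{not} a recursion since the right-hand side has no $Q$-terms, and feeding it into the elementary observation $(a_1+a_2)/(b_1+b_2)\le\max\{a_1/b_1,a_2/b_2\}$ gives $\slope_QM\le\max\{\slope_RM,d\}$ directly — that is part (1), with bound $d$ and no iteration. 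The right three terms give the genuine recursion $\topp iRM\le\max\{\topp iQM,\,\topp{i-2}RM+d\}$ (with $\topp{i-2}RM$, not $\topp{i-2}QM$), and here the key point is that iteration climbs $2$ homological steps per internal step $d$, hence $\topp iRM\le\max_{0\le 2h\le i}\{\topp{i-2h}QM+hd\}$, which yields $\slope_RM\le\max\{\slope_QM,d/2\}$ — that is part (2). You assigned $d/2$ to part (1) and attempted to derive part (2) by a ``symmetric bookkeeping'' in terms of $\topp{i-2}QM$, which is wrong on both counts. Your concern about the hypothesis $J\ne QJ_1$ and the $\deg(f)=1$ case is a red herring: Proposition \ref{thm:ceub} is not invoked here at all, and the long-exact-sequence argument runs uniformly for any $d\ge1$.

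On the equality claims: for part (1) the right mechanism is that $f\notin(Q_{\irr})^2$ forces the connecting maps $\delta$ to vanish (Nagata), turning the inequality $\topp 1QM\le\max\{\topp 1RM,\,\topp 0RM+d\}$ into an equality, so already at $i=1$ one gets $\topp 1QM-\topp 0QM\ge d$ and hence $\slope_QM\ge d$, while the $i\ge2$ equalities give $\slope_QM\ge\slope_RM$. Your proposed lower bound $\topp 2QM\ge d+\topp 0RM$ only yields $\slope_QM\ge d/2$, which is insufficient, and you never address $\slope_QM\ge\slope_RM$. For part (2) your Shamash-operator idea is the right one in spirit; the paper packages it as surjectivity of $\delta$ when $f\in Q_{\irr}\ann_QM$, which converts all inequalities in the iterated recursion into equalities, but either formulation works if the details are filled in.
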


\begin{proof}
We start by noting an elementary inequality that will be invoked
a couple of times:  All pairs of real numbers $(a_1,a_2)$ and $(b_1,b_2)$
with positive $b_1$ and $b_2$ satisfy
  \begin{equation}
   \label{eq:short}
\frac {a_1+a_2}{b_1+b_2} \leq 
\max\left\{\frac{a_1}{b_1}\,,\,\frac{a_2}{b_2}\right\}\,.
    \end{equation}

Set $d=\deg(f)$.  The minimal graded free resolution of $R$ over $Q$ is
  \[
0\lra Q(-d) \xra{\ f \ } Q\lra 0
  \]
so $\Tor qQRk$ vanishes for $q\ne0,1$, is isomorphic to $k$ for 
$q=0$, and to $k(-d)$ for $q=1$, so for each pair $(i,j)$ the 
spectral sequence \ref{ce} yields an exact sequence
  \begin{equation}
   \label{eq:long}
 \begin{gathered}
\xymatrixcolsep{1.3pc}
\xymatrixrowsep{.3pc}
\xymatrix {
&&\Tor{i+1}RkM_{j}\ar@{->}[rr]^-{\delta_{i+1,j}}
&&\Tor{i-1}RkM_{j-d}
  \\
\ar@{->}[r]
&\Tor{i}QkM_{j}\ar@{->}[r]
&\Tor{i}RkM_{j}\ar@{->}[rr]^-{\delta_{i,j}}
&&\Tor{i-2}RkM_{j-d}
}
\end{gathered}
    \end{equation}
The one for $i=0$ gives the following equality:
  \begin{equation}
   \label{eq:zero}
\topp{0}QM=\topp{0}RM\,.
    \end{equation}

(1) For $i\ge1$ the middle three terms of the exact sequences \eqref{eq:long} yield
  \begin{equation}
   \label{eq:long1}
    \begin{aligned}
\topp iQM
&\le\max\{\topp iRM,(\topp{i-1}RM+d)\}
  \end{aligned}
    \end{equation}
{From} \eqref{eq:long1}, \eqref{eq:zero},
and \eqref{eq:short} we obtain the inequalities below:
  \begin{align*}
\slope_QM
&=\sup_{i\ges1}\left\{\frac{\topp{i}QM-\topp 0QM}{i}\right\}
  \\
&\le \sup_{i\ges1}\left\{\max\left\{\frac{\topp{i}RM-\topp 0RM}i\,,\,
\frac{(\topp{i-1}RM-\topp 0RM)+d}{(i-1)+1}\right\}\right\}
  \\
&\leq\sup_{i\ges2}\left\{\max\left\{\frac{\topp{i}RM-\topp 0RM}{i}\,,
\frac{\topp{i-1}RM-\topp 0RM}{i-1},d\right\}\right\}
  \\
&=\max\left\{\sup_{i\ges1}
\left\{\frac{\topp{i}RM-\topp 0RM}{i}\right\},d\right\}
  \\
&=\max\left\{\slope_RM,d\right\}\,.
  \end{align*}

When $f\notin(Q_{\irr})^2$ holds, the proof in \cite[3.3.3(1)]{Av:barca} of a 
result of Nagata, implies $\delta_{i,j}=0$ in \eqref{eq:long}, so equalities 
hold in \eqref{eq:long1}.  This and \eqref{eq:zero} give
  \begin{align*}
\topp{1}QM-\topp{0}QM&=\max\{\topp{1}RM-\topp{0}RM,d\}\,,
  \\
\topp{i}QM-\topp{0}QM&\ge\topp{i}RM-\topp{0}RM
\quad\text{for}\quad i\ge2\,.
  \end{align*}
The preceding relations clearly imply $\slope_QM\ge\max\{\slope_RM,d\}$.
  
(2)  For $i\ge1$ the last three terms of the exact sequences \eqref{eq:long} yield
  \begin{equation}
   \label{eq:long2}
    \begin{aligned}
\topp iRM
&\le\max\{\topp iQM,(\topp{i-2}RM+d)\}\\
&\le\max\{\topp iQM,(\topp{i-2}QM+d),(\topp{i-4}RM+2d)\}
\le\cdots\\
&\le\max_{0\les 2h\les i}\{\topp{i-2h}QM+hd\}\,.
  \end{aligned}
    \end{equation}

{From} \eqref{eq:long2}, \eqref{eq:zero},
and \eqref{eq:short} we obtain the inequalities below:
  \begin{align*}
\slope_RM
&=\sup_{i\ges1}\left\{\frac{\topp{i}RM-\topp 0RM}{i}\right\}
  \\
&\le\sup_{i\ges1}\left\{\max_{0\les 2h\les i}
\left\{\frac{\topp{i-2h}QM-\topp 0QM+hd}{(i-2h)+(2h)}\right\}\right\}
  \\
&\leq\sup_{i\ges1}\left\{\max_{0\les 2h< i}
\left\{\frac{\topp{i-2h}QM-\topp 0QM}{i-2h}\,,\,\frac{d}{2}\right\}\right\}
  \\
&=\max\left\{\sup_{i\ges1}
\left\{\frac{\topp{i}QM-\topp 0QM}{i}\right\}\,,\,\frac{d}{2}\right\}
  \\
&=\max\left\{\slope_QM\,,\,\frac{d}{2}\right\}\,.
  \end{align*}

For $f\in Q_{\irr}\ann_QM$, the proof in \cite[3.3.3(2)]{Av:barca} of a 
result of Shamash shows that $\delta_{i,*}$ in \eqref{eq:long} is surjective, 
so equalities hold in \eqref{eq:long2}; in view of \eqref{eq:zero} one gets
  \begin{align*}
\topp{1}RM-\topp{0}RM&=\topp{1}QM-\topp{0}QM\,,
  \\
\topp{2}RM-\topp{0}RM&=\max\{\topp{2}QM-\topp{0}QM,d\}\,,
  \\
\topp{i}RM-\topp{0}RM&\ge\topp{i}QM-\topp{0}QM \quad\text{for}\quad
i\ge3\,.
  \end{align*}
These relations clearly imply an inequality
$\slope_RM\ge\max\{\slope_QM,d/2\}$.
   \end{proof}

\section{Small homomorphisms of graded algebras}
  \label{sec:small}

A homomorphism $\vf\col Q\to R$ of graded $K$-algebras is called
\emph{small} if the map
\[
\Tor i{\vf}kk_j\col \Tor iQkk_j \to \Tor iRkk_j 
\]
is injective for each pair $(i,j)\in\BN\times\BZ$; see \ref{ch:small} for 
examples.  Recall that \emph{homological products} turn $\Tor{}QkR$ 
into a bigraded algebra; see \cite[Ch.\,XI, \S4]{CE}.

\begin{theorem}
 \label{thm:small} 
Let $Q$ be a standard graded $K$-algebra, $\vf\col Q\to R$ a surjective
small homomorphism of graded $K$-algebras with $\Ker\vf\ne0$, and set
$c=\Rate R$.

For every integer $i\ge1$ there are then inequalities
  \[
{\topp iQR} \le \slope_QR\cdot i \le (c+1)\cdot i\,,
  \]
and the following conditions are equivalent:
  \begin{enumerate}[\quad\rm(i)]
 \item
$\topp iQR=(c+1)\cdot i$.
 \item
$\topp hQR=(c+1)\cdot h$ for $1\le h\le i$.
 \item
$\topp1QR=c+1$ and $\Tor iQkR_{i(c+1)}=(\Tor 1QkR_{c+1})^i\ne0$.
  \end{enumerate}
   \end{theorem}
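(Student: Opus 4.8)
The plan is to prove the displayed inequalities first and then deduce all three equivalences from a single multiplicative identity in the bigraded algebra $\Tor*QkR$. For the inequalities: $\Tor 0QkR\cong k$ is concentrated in internal degree $0$, so $\topp 0QR=0$, and the first inequality $\topp iQR\le\slope_QR\cdot i$ follows at once from the definition of the slope. For the second I would apply Proposition~\ref{thm:celb} with $M=k$: smallness of $\vf$ is exactly the hypothesis there, and $\topp 0Rk=0$, so it gives $\slope_QR\le1+s$ with $s=\sup_{i\ges2}\{(\topp iRk-1)/(i-1)\}$. Since $Q$ is standard graded so is $R$; the exact sequence $0\to R_{\irr}\to R\to k\to0$ gives $\Tor iRkk\cong\Tor{i-1}Rk{R_{\irr}}$ for $i\ge2$, hence $\topp iRk=\topp{i-1}R{R_{\irr}}$, and since $\topp 0R{R_{\irr}}=1$ one gets $s=\sup_{i\ges1}\{(\topp iR{R_{\irr}}-1)/i\}=\slope_RR_{\irr}=c$. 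Thus $\slope_QR\le c+1$; in particular $\topp rRk\le c(r-1)+1$ for $r\ge1$ and $\topp iQR\le(c+1)i$ for $i\ge0$.

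The core assertion, valid with no hypotheses beyond those of the theorem, is
\[
\Tor iQkR_{i(c+1)}=\bigl(\Tor 1QkR_{c+1}\bigr)^{\!i}\qquad\text{for every }i\ge0.
\]
Granting it, put $W=\Tor 1QkR_{c+1}$. Together with $\topp iQR\le(c+1)i$ the identity gives $\topp iQR=(c+1)i\iff W^{i}\ne0$; and since $W^{i}=W^{h}\!\cdot W^{i-h}$, the condition $W^{i}\ne0$ forces $W^{h}\ne0$, hence $\topp hQR=(c+1)h$, for all $1\le h\le i$. This proves (i)$\Leftrightarrow$(ii). Finally (iii) asserts $W\ne0$ (equivalently $\topp 1QR=c+1$, as $\topp1QR\le c+1$) together with $W^{i}\ne0$, so it too is equivalent to $W^{i}\ne0$, i.e.\ to (i).

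To prove the identity I would analyze the spectral sequence of \ref{ce} with $M=k$, so that $\EH 2pqj\cong\bigoplus_{j_1+j_2=j}\Tor pRkk_{j_1}\otimes_k\Tor qQkR_{j_2}$ abuts to $\Tor{p+q}Qkk_j$, and smallness of $\vf$ gives $\EH{\infty}pq*=0$ for $q\ge1$ (this is the consequence of \eqref{eq:ceedge} invoked in the proof of Proposition~\ref{thm:celb}). Fix $h\ge1$. From $\topp rRk\le c(r-1)+1$ and $\topp qQR\le(c+1)q$ one computes $\topp rRk+\topp{h-r+1}QR\le(c+1)h-r+2$ for $1\le r\le h$, and checks that $r=h+1$ forces $(c+1)h\le ch+1$, impossible for $h\ge2$; hence $\EH 2r{h-r+1}{(c+1)h}=0$ for all $r\ge3$, while $\EH 2{2}{h-1}{(c+1)h}\cong\Tor 2Rkk_{c+1}\otimes_k\Tor{h-1}QkR_{(c+1)(h-1)}$. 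Therefore at the node $(0,h,(c+1)h)$ every higher incoming differential has zero source and the outgoing ones have zero target, so $\EH 3{0}h{(c+1)h}=\EH{\infty}{0}h{(c+1)h}=0$; being a cokernel of $\ED 2{2}{h-1}{(c+1)h}$, this says that differential is onto $\EH 2{0}h{(c+1)h}=\Tor hQkR_{(c+1)h}$. Now the spectral sequence is one of bigraded algebras, with second page $\Tor*Rkk\otimes_k\Tor*QkR$ as a bigraded algebra, each differential a derivation, and the edge subalgebra $\EH 2{0}{*}{*}$ carrying the usual product of $\Tor*QkR$. For $v\in\Tor 2Rkk_{c+1}$ and $y\in\Tor{h-1}QkR_{(c+1)(h-1)}$ the element $1\otimes y$ lies in $\EH 2{0}{h-1}{*}$, on which ${}^2\!\operatorname{d}$ vanishes since $\EH 2{-2}{h}{*}=0$; so the derivation property gives $\ED 2{2}{h-1}{(c+1)h}(v\otimes y)=\pm\,\ED 2{2}{0}{c+1}(v)\cdot y$, the product being that of $\Tor*QkR$. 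Hence the image of $\ED 2{2}{h-1}{(c+1)h}$ equals $\ED 2{2}{0}{c+1}(\Tor 2Rkk_{c+1})\cdot\Tor{h-1}QkR_{(c+1)(h-1)}$; taking $h=1$ identifies $\ED 2{2}{0}{c+1}(\Tor 2Rkk_{c+1})$ with $\Tor 1QkR_{c+1}$, and substituting back yields $\Tor hQkR_{(c+1)h}=\Tor 1QkR_{c+1}\cdot\Tor{h-1}QkR_{(c+1)(h-1)}$ for all $h\ge1$. Induction on $h$ gives the identity.

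The parts using the degree bounds of Section~\ref{sec:rate} and the vanishing $\EH{\infty}pq*=0$ are routine bookkeeping. I expect the main obstacle to be the last part of the previous paragraph: fixing the multiplicative structure on the $\EH 2pqj$-page of the Cartan--Eilenberg change-of-rings spectral sequence in the present graded setting and verifying that ${}^2\!\operatorname{d}$ restricts on the edge line to the product of $\Tor*QkR$. In keeping with the conventions announced in the introduction, these facts have to be transported from the ungraded statements in \cite[Chs.\,XI and XVI]{CE}.
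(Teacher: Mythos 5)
Your proof of the inequalities $\topp iQR\le\slope_QR\cdot i\le(c+1)i$ is the same as the paper's: apply Proposition~\ref{thm:celb} with $M=k$ and identify $s$ with $c=\Rate R=\slope_RR_{\irr}$ via $\topp{i}Rk=\topp{i-1}R{R_{\irr}}$.

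For the equivalence of (i)--(iii) you take a genuinely different route. The paper reduces everything to the same unconditional identity $\Tor iQkR_{i(c+1)}=(\Tor 1QkR_{c+1})^i$ that you isolate, but it proves it by passing to a minimal model $Q[X]$ of $\vf$: each variable $x\in X_n$ also appears in a minimal model of the canonical presentation $\wt R\to R$ (this is \ref{model:small}, quoted from \cite{AI}), hence contributes to $\Tor{n+1}Rkk$ and so satisfies $\deg(x)\le c|x|+1$; an arithmetic argument then shows $k[X]_{i,(c+1)i}=(kX_{1,c+1})^i$, and the conclusion follows from $\Tor{}QkR\cong\hh{k[X]}$. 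Your approach instead stays inside the Cartan--Eilenberg spectral sequence of \ref{ce} with $M=k$: smallness forces $\EH\infty pq*=0$ for $q\ge1$; your degree bookkeeping correctly shows that at the node $(0,h,(c+1)h)$ the only possibly nonzero incoming differential is $\ED 2{2}{h-1}{(c+1)h}$, hence it is onto $\Tor hQkR_{(c+1)h}$, and that its source reduces to $\Tor 2Rkk_{c+1}\otimes_k\Tor{h-1}QkR_{(c+1)(h-1)}$; you then use the derivation property of $\ED 2{}{}{}$ to convert this surjectivity into the recursion $\Tor hQkR_{(c+1)h}=\Tor 1QkR_{c+1}\cdot\Tor{h-1}QkR_{(c+1)(h-1)}$. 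The bookkeeping is correct and the strategy is elegant; the trade-off is where the nontrivial input sits. The paper leans on the structure theory of minimal models (\ref{model:exist}--\ref{model:small}); you lean on the fact that this particular change-of-rings spectral sequence is a spectral sequence of bigraded algebras with $E_2\cong\Tor{}Rkk\otimes_k\Tor{}QkR$ as a $k$-algebra and with derivations for differentials. You flag this as the ``main obstacle,'' but I would stress that it is more than a matter of transporting graded indices: \cite[Ch.\,XI]{CE} gives products on $\Tor{}{}{}{}$, and \cite[Ch.\,XVI,\,\S5]{CE} constructs the spectral sequence from arbitrary projective resolutions, so it does not hand you multiplicativity. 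To get it one must rebuild the filtered object from DG algebra resolutions (e.g.\ filter $Q'\otimes_QP$ by $P$-degree with $Q'\to k$ a DG algebra resolution over $Q$ and $P\to k$ one over $R$) and check that the induced product on $E_1$, $E_2$ is the expected tensor-product algebra with the edge row $E^2_{0,*}$ carrying the $\Tor{}QkR$-product. That verification is comparable in weight to the model-theoretic facts the paper cites, so neither route is strictly cheaper; but your argument, once that structure is in place, is self-contained in the spectral sequence and does not require \ref{model:small}.
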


Before starting on the proof of the theorem we present an application, followed 
by a couple of easily verifiable sufficient conditions for the smallness of $\vf$.
 
 \begin{corollary}
 \label{cor:small} 
With $J=\Ker\vf$, the following assertions hold:
  \begin{enumerate}[\rm(1)]
  \item
$\topp iQR=(c+1)\cdot i$ for some $i\ge1$ implies the conditions \newline
$\topp hQR=(c+1)\cdot h$ for $1\le h\le i$ and $i\le\rank_k(J/Q_{\irr}J)_{c+1}$.
  \item
$\topp iQR<(c+1)\cdot i$ holds for all $i>\dim Q-\dim R$ when 
$\pd_QR$ is  finite.
  \item
$\reg_QR\le c\cdot\pd_QR$.
  \end{enumerate}
    \end{corollary}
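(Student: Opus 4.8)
The three statements are all deduced from Theorem~\ref{thm:small}, which for every $i\ge1$ supplies the inequality $\topp iQR\le(c+1)i$ together with the equivalence of the conditions (i)--(iii) recorded there. I shall also use two standard facts: the canonical isomorphism $\Tor 1QkR\cong J/Q_{\irr}J$ of graded $k$-vector spaces, and the fact that the homological products make $\Tor{}QkR$ into a graded-commutative $k$-algebra in which every element of homological degree $1$ squares to zero (this is visible on computing $\Tor{}QkR$ from a DG $\Gamma$-algebra resolution, i.e.\ an acyclic closure, of $R$ over $Q$).

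The proof of (1) starts with the remark that the chain $\topp hQR=(c+1)h$ for $1\le h\le i$ is precisely the implication (i)$\,\Rightarrow\,$(ii) of Theorem~\ref{thm:small}. For the numerical bound, (i)$\,\Rightarrow\,$(iii) gives that $(\Tor 1QkR_{c+1})^{i}$ is nonzero; but this subspace of $\Tor iQkR$ is a homomorphic image of the exterior power $\Lambda_{k}^{i}(\Tor 1QkR_{c+1})$, since the space $\Tor 1QkR_{c+1}$ sits in homological degree $1$ and odd-degree elements square to zero, and $\Lambda_{k}^{i}$ vanishes once $i$ exceeds $\rank_{k}\Tor 1QkR_{c+1}=\rank_{k}(J/Q_{\irr}J)_{c+1}$; hence $i$ cannot exceed that rank. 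Part~(3) is a direct estimate: in $\reg_QR=\sup_{i\ge0}\{\topp iQR-i\}$ the term $i=0$ equals $\topp 0QR=0$, each term with $1\le i\le\pd_QR$ is at most $(c+1)i-i=ci\le c\cdot\pd_QR$ by Theorem~\ref{thm:small}, and $\Tor iQkR=0$ for $i>\pd_QR$; hence $\reg_QR\le c\cdot\pd_QR$, the inequality being vacuous when $\pd_QR=\infty$.

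Part~(2) is the heart of the matter. Assume $\topp iQR=(c+1)i$ for some $i\ge1$ and that $\pd_QR$ is finite. Using (i)$\,\Rightarrow\,$(iii) I would produce linearly independent classes $x_{1},\dots,x_{i}$ in $\Tor 1QkR_{c+1}$ whose product $x_{1}\cdots x_{i}$ is nonzero in $\Tor iQkR$, and then lift them, through $\Tor 1QkR\cong J/Q_{\irr}J$, to forms $f_{1},\dots,f_{i}$ of degree $c+1$ belonging to a minimal generating set of $J$. The goal is then to show that $f_{1},\dots,f_{i}$ is a $Q$-regular sequence; once this is in hand, the inclusion $(f_{1},\dots,f_{i})\subseteq J$ forces $\dim R=\dim Q/J\le\dim Q/(f_{1},\dots,f_{i})=\dim Q-i$, whence $i\le\dim Q-\dim R$, which is exactly the assertion that $\topp iQR<(c+1)i$ for all $i>\dim Q-\dim R$. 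I expect the regular-sequence step to be the main obstacle: one must argue that a nonzero $i$-fold product of linearly independent classes of $\Tor 1QkR$ cannot occur, for a small homomorphism $\vf$ with $\pd_QR<\infty$, unless the corresponding minimal generators of $J$ form a regular sequence. This is where structural results on small homomorphisms of commutative noetherian rings have to be invoked, in contrast with the homological-algebra arguments of the earlier sections; everything else is bookkeeping.
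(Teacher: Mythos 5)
Your treatment of parts (1) and (3) matches the paper's own proof: part (3) is indeed a direct consequence of the bound $\topp iQR\le(c+1)i$ from Theorem~\ref{thm:small}, and part (1) uses, as you do, that $(\Tor 1QkR_{c+1})^i$ is the image of the canonical map $\lambda_{i,*}\col\bigwedge^i_k(J/Q_{\irr}J)_*\to\Tor iQkR_*$ coming from the strictly skew-commutative structure of $\Tor{}QkR$; you justify the skew-commutativity via a DG $\Gamma$-algebra resolution, the paper via \cite[Ch.\,XI,\,\S4]{CE}, but the mechanism is the same.

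Part (2) is where the gap lies, and it is genuine. You propose to lift classes $x_1,\dots,x_i\in\Tor 1QkR_{c+1}$ with nonzero product to minimal generators $f_1,\dots,f_i$ of $J$ and then prove that $f_1,\dots,f_i$ is a $Q$-regular sequence, from which $i\le\dim Q-\dim R$ would follow. You explicitly flag the regular-sequence step as unproven, and with good reason: a nonzero $i$-fold product in the Koszul homology algebra does not in general force the underlying minimal generators to form a regular sequence, so the plan as stated is not obviously salvageable. Your guess that "structural results on small homomorphisms" are what is needed is also off the mark; the smallness hypothesis plays no role in the paper's proof of (2). The paper instead combines two theorems that together give the vanishing directly: Peskine--Szpiro, which gives $\grade_QR=\dim Q-\dim R$ when $\pd_QR$ is finite, and a theorem of Bruns, which asserts that the map $\lambda_{i,*}$ above is identically zero once $i>\grade_QR$. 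Hence for $i>\dim Q-\dim R$ one has $(\Tor 1QkR_{c+1})^i=0$, so condition~(iii) of Theorem~\ref{thm:small} fails, and therefore $\topp iQR<(c+1)i$. This route never requires producing an actual regular sequence inside $J$, only the vanishing of the $i$-fold products, which is a weaker and readily available statement.
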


  \begin{proof}
Homological products are strictly skew-commutative for the homological
degree, see \cite[Ch.\,XI, \S4]{CE}, so $(\Tor 1QkR_*){}^i$ is the image of 
a canonical $k$-linear map
  \[
\lambda_{i,*}\col \textstyle{\bigwedge}^i_k(J/Q_{\irr}J)_*\cong
\textstyle{\bigwedge}^i_k \Tor 1QkR_* \to \Tor iQkR_*\,.
  \]

(1)  This follows from the map above and the implication (i)$\implies$(ii) and (iii).

(2)  When $\pd_QR$ is finite one has $\grade_QR=\dim Q-\dim R$ by a 
theorem of Peskine and Szpiro~\cite{PS}, and $\lambda_{i,*}=0$ for 
$i>\grade_QR$ from a theorem of Bruns~\cite{Br}.  Thus, Theorem 
\ref{thm:small} implies ${\Tor iQkR}_j=0$ for $j\ge(c+1)i$.

(3) The theorem gives $\topp iQR-i\le ci$ for each $i$, hence
$\reg_QR\le c\cdot\pd_QR$.
   \end{proof}

A bit of notation comes in handy at this point.

  \begin{chunk}
    \label{canonical}
A standard graded $K$-algebra $R$ has a \emph{canonical presentation}
$R=\wt R/I_R$ with $\wt R$ the symmetric $K$ algebra on $R_1$ and 
$I_R\subseteq(\wt R_{\irr})^2$, obtained from the epimorphism 
of $K$-algebras $\wt R\to R$ extending the identity map on $R_1$.

If $Q$ is standard graded $K$-algebra and $\vf\col Q\to R$ is a surjective
homomorphism with $\Ker\vf\subseteq(Q_{\irr})^2$, then $\wt R\to R$ 
factors as ${\wt R}\cong\wt Q\to Q\xra{\vf}R$.   
  \end{chunk}
  
 \begin{chunk}
   \label{ch:small}
A homomorphism $\vf$ as on \ref{canonical} is small if $J=\Ker\vf$ satisfies 
one of the conditions:
  \begin{enumerate}[\quad\rm(a)]
 \item
$J\subseteq(f_1,\dots,f_a)$, where $f_1,\dots,f_a$ is some
$Q$-regular sequence in $Q_{\irr}$.
 \item
$J_j=0$ for $j\le\reg_{\wt Q}Q$, where $Q={\wt Q}/I_Q$ is the 
canonical presentation.
  \end{enumerate}

Indeed, see \cite[4.3]{Av:small} for (a), and \c Sega \cite[5.1, 9.2(2)]{Se} 
for (b). 
 \end{chunk}

\emph{The hypothesis of Theorem \emph{\ref{thm:small}} are in force 
for the rest of this section.}  The proof of the theorem utilizes free 
resolutions with additional structure.  

A \emph{model} of $\vf$ is a differential bigraded $Q$-algebra
$Q[X]$ with the following properties:  For $n\ge1$ here exist
linearly independent over $K$ homogeneous subsets $X_n=\{x\in X\mid
|x|=n\}$, such that the underlying bigraded algebra is isomorphic
to $Q\otimes_K\bigotimes_{n=1}^{\infty}K[X_n]$, where $K[X_n]$ is
the exterior algebra of the graded $K$-vector space $KX_n$ when $n$
is odd, and the symmetric algebra of that space when $n$ is even.
The differential satisfies $\deg(\dd(y))=\deg(y)$ for every element $y\in
Q[X]$, and the following sequence of homomorphisms of free graded
$Q$-modules is resolution of $R$:
  \[
\cdots \lra Q[X]_{n,*}\xra{\,\dd\,} Q[X]_{n-1,*}\cdots \lra\cdots
\lra Q[X]_{0,*}\lra 0
  \]

A $Q$-basis of $Q[X]$ is provided by the set consisting of $1$ and
all the monomials $x_{1}^{d_1}\cdots x_{s}^{d_s}$ with $x_r\in X$, and
with $d_r=1$ when $|x_r|$ is odd, respectively, $d_r\ge1$ when $|x_r|$
is even.  The model $Q[X]$ is said to be \emph{minimal} if for each $x\in
X$,  the coefficient of every $x_i\in X$ in the expansion of $\dd(x)$
is contained in $Q_{\irr}$.

We summarize the properties of minimal models used in our arguments.

  \begin{chunk}
    \label{model:exist}
A minimal model $Q[X]$ of $\vf$ always exists, and is unique up to
non-canonical isomorphism of differential bigraded $Q$-algebras; see
\cite[7.2.4]{Av:barca}.  In such a model $\dd(X_1)$ is a minimal set of
homogeneous generators of the $\Ker\vf$ and $Q[X_1]$ is the Koszul 
complex on that set, with its standard bigrading, differential and 
multiplication.
 \end{chunk}

  \begin{chunk}
    \label{model:omega}
Let ${\wt R}[Z]$ be a minimal model for the canonical presentation 
${\wt R}\to R$, see \ref{canonical}.  Let $Z_0$ be a $K$-basis of 
${\wt R}_1$, and choose a $k$--linearly independent set
  \[
Z'=\{z'\mid |z'|=|z|+1\text{ and }\deg(z')=\deg(z)\}_{z\in Z_0\sqcup Z}\,.
  \]
By \cite[7.2.6]{Av:barca}, there exists an isomorphism of bigraded 
$k$-vector spaces
  \[
\Tor{}Rkk\cong\bigotimes_{n=1}^{\infty}k\langle Z'_n\rangle\,,
  \]
where $k\langle Z'_n\rangle$ denotes the exterior algebra of the 
graded $k$-vector space $kZ'_n$ when $n$ is odd, and the divided 
powers algebra of that space when $n$ is even.
   \end{chunk}

  \begin{chunk}
    \label{model:small}
Let $Q[X]$ be a minimal model for $\vf$, and let ${\wt R}\xra{\psi}Q\xra{\vf}R$
be a factorization of the canonical presentation $\wt R\to R$ as in \ref{canonical}.  
If ${\wt R}[Y]$ is a minimal model for $\psi$, then there is a minimal model 
${\wt R}[Z]$ of $\wt R\to R$ with $Z=Y\sqcup X$; see \cite[4.11]{AI}.
  \end{chunk}

  \begin{chunk}
\begin{proof}[Proof of Theorem \emph{\ref{thm:small}}]
For every integer $i\ge2$ the following equality holds:
  \begin{equation}
    \label{eq:c}
\topp{i-1}R{R_{\irr}}-\topp{0}R{R_{\irr}}=\topp{i}Rk-1\,.
  \end{equation}
Thus, for $i\ge1$ the definition of slope and Proposition \ref{thm:celb} 
applied with $M=k$ give
  \begin{equation}
   \label{eq:t}
{\topp iQR} /i\le\slope_QR\le c+1\,.
  \end{equation}

It remains to establish the equivalence of the conditions in the theorem.

(iii)$\implies$(ii).  
The condition $(\Tor 1QkR_{c+1})^i\ne0$ forces $(\Tor 1QkR_{c+1})^h\ne0$
for $h=1,\dots,i$.  As $\Tor{}QkR$ is a bigraded algebra, one gets
 \[
\Tor hQkR_{(c+1)h}\supseteq(\Tor 1QkR_{c+1})^h\ne0\,.
 \]
This implies $\topp hQR\ge(c+1)h$, and \eqref{eq:t} provides the 
converse inequality.

(ii)$\implies$(i).  This implication is a tautology.

(i)$\implies$(iii).  
The hypothesis means $\Tor iQkR_{i(c+1)}\ne0$, so
we have to prove
  \begin{equation}
    \label{eq:kx0}
\Tor iQkR_{i(c+1)}=(\Tor 1QkR_{c+1})^i\,.
  \end{equation}

Let $Q[X]\to R$ be a minimal model and set $k[X]=k\otimes_QQ[X]$.
The bigraded $k$-algebras $\hh{k[X]}$ and $\Tor{}QkR$ are isomorphic,
with  
  \begin{equation}
\Tor iQkR_j\cong\HH i{k[X]}_j\,.
    \label{eq:hkx}
  \end{equation}

In view of \ref{model:small} each $x\in X_n$ can be viewed as an
indeterminate of a minimal model of ${\wt R}\to R$, and so by \ref{model:omega}
it defines an element $x'$ in $\Tor{n+1}Rkk$ with $\deg(x) =\deg(x')$.
{From} this equality and \eqref{eq:c} we obtain
  \begin{equation}
    \label{eq:x}
\deg(x)
 =\deg(x') \le\topp{n+1}Rk \le cn+1=c|x|+1\,.
   \end{equation}
The $k$-vector space $k[X]_{i,(c+1)i}$ has a basis of monomials
$x_1^{d_1}\cdots x_s^{d_s}$ with $x_r\in X$ and $d_r\ge1$.  The following
relations hold, with the inequality coming from \eqref{eq:x}:
  \begin{align*}
\sum_{r=1}^sd_r|x_r| &=\big|x_1^{d_1}\cdots x_s^{d_s}\big|=i=(c+1)i-ci\\
&=\deg\big(x_1^{d_1}\cdots x_s^{d_s}\big)-c\big|x_1^{d_1}\cdots
x_s^{d_s}\big| 
  \\
&=\sum_{r=1}^sd_r(\deg(x_r)-c|x_r|)
  \\ 
&\le\sum_{r=1}^sd_r\,.
  \end{align*}
All $d_r$ and $|x_r|$ are positive integers, so for $1\le r\le s$ we get first
$|x_r|=1$, then $|x_r|=\deg(x_r)-c|x_r|$; thats is,  $\deg(x_r)=c+1$.  We 
have now proved
  \[
k[X]_{i,(c+1)i}=k[X_1]_{i,(c+1)i}=(kX_{1,c+1})^i\,.
  \]
The isomorphism \eqref{eq:hkx} maps $\Tor 1QkR_{c+1}$ to $kX_{1,c+1}$
and $\Tor iQkR_{(c+1)i}$ to a quotient of $k[X]_{i,(c+1)i}$, so the
equalities above establish \eqref{eq:kx0}.
  \end{proof}
 \end{chunk}

  \section{Koszul agebras}
    \label{sec:koszul}
    
In this section we prove and discuss the theorem stated in the introduction.

Here $Q$ is a standard graded $K$-algebra, $\vf\col Q\to R$ a 
surjective homomorphism of graded $K$-algebras, and $M$ a 
graded $R$-module.  As in \cite{PP}, we say that $M$ is 
\emph{Koszul} over $Q$ if $\Tor iQkM_j=0$ unless $i=j$.
In the following proposition the Koszul hypotheses are related 
to the injectivity of $\Tor{}{\vf}kM$ through the following lemma.

 \begin{proposition}
   \label{prop:koszul_small}
Assume that $J$ is contained in $(Q_{\irr})^2$.
  \begin{enumerate}[\rm(1)]
 \item
If $Q$ is Koszul, then $\vf$ is small.
 \item     
If $\vf$ is small and $M$ is Koszul over $Q$, then $\Tor{}{\vf}kM$
is injective.
  \end{enumerate}
 \end{proposition}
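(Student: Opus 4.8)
The plan is to prove the two assertions separately, using the Cartan--Eilenberg change-of-rings spectral sequence from \ref{ce} and the structural facts about minimal models collected in \ref{model:exist}--\ref{model:small}.

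For part (1), I would reduce the smallness of $\vf$ to a statement about the minimal model $Q[X]$ of $\vf$. The point is that, by \ref{model:omega} applied along the factorization ${\wt R}\xra{\psi}Q\xra{\vf}R$ of the canonical presentation and by \ref{model:small}, the set $Z$ of indeterminates of a minimal model of ${\wt R}\to R$ decomposes as $Z=Y\sqcup X$, where $Y$ comes from a minimal model of $\psi$ and $X$ from the minimal model of $\vf$. The associated set $Z'$ in \ref{model:omega} then splits accordingly, and the inclusion $kX'_n\hra kZ'_n$ of graded $k$-vector spaces induces, via the tensor-product description $\Tor{}Rkk\cong\bigotimes_n k\langle Z'_n\rangle$, an injection on the subalgebra built from $X'$. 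So the real task is to identify $\Tor{}QkR$ (equivalently $\hh{k[X]}$, by an isomorphism as in \eqref{eq:hkx}) with the subalgebra of $\Tor{}Rkk$ generated by $X'$, and to check that the map $\Tor{}{\vf}kk$ carries it there compatibly. Here the hypothesis that $Q$ is Koszul enters: it forces $\reg_{\wt Q}Q=0$, hence $J_j=0$ for $j\le 0$ trivially, but more usefully it gives $\topp hQk=h$ for all $h$, which controls the internal degrees of the $Y$-variables and of $X$, and is exactly what makes the model-theoretic comparison an isomorphism onto a retract. In fact this is precisely condition \ref{ch:small}(b), so I expect the cleanest route is to verify that $Q$ Koszul forces $J_j=0$ for $j\le\reg_{\wt Q}Q$ and then invoke \ref{ch:small}(b) directly; the content is that $\reg_{\wt Q}Q\le 1$ when $Q$ is Koszul, while $J\subseteq(Q_{\irr})^2$ already guarantees $J_1=0$, and $J_0=0$ is automatic.

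For part (2), I would run the change-of-rings spectral sequence \eqref{eq:cesequence} and exploit the two bigrading constraints simultaneously. Smallness of $\vf$ gives injectivity of $\Tor i{\vf}kk$, and the edge map identification \eqref{eq:ceedge} applied with $M=k$ shows $\EH{\infty}{p}{q}{*}=0$ for $q\ge1$ in the resolution of $k$; this is the collapse used already in the proof of Proposition \ref{thm:celb}. Now I want to transfer this to the module $M$. On the $E^2$-page for $M$ one has $\EH 2pqj\cong\bigoplus_{j_1+j_2=j}\Tor pRkM_{j_1}\otimes_k\Tor qQkR_{j_2}$. The Koszul hypothesis on $M$ over $Q$ means $\Tor p QkM_{j}=0$ unless $p=j$; I would combine this with the internal-degree bookkeeping — $\Tor qQkR_{j_2}$ is concentrated in degrees $j_2\le (c+1)q$ by the inequality in Theorem \ref{thm:small} (or just by $\slope_QR\le c+1$), and the abutment $\Tor{p+q}QkM_j$ is nonzero only when $j=p+q$ since $M$ is Koszul over $Q$. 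Matching these two facts along each differential forces any nonzero class with $q\ge1$ surviving to $E^\infty$ to violate the Koszulness of $M$ over $Q$; hence $\EH{\infty}p q*=0$ for $q\ge1$, which by the edge-homomorphism description \eqref{eq:ceedge}--\eqref{eq:cechange} is exactly the injectivity of $\Tor i{\vf}kM$ for every $i$.

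The main obstacle will be part (1): making rigorous the claim that the minimal model of $\vf$ sits inside the minimal model of ${\wt R}\to R$ as a retract in a way visible on homology, i.e. that $\Tor{}{\vf}kk$ really is the inclusion of the $X'$-subalgebra under the tensor decomposition of \ref{model:omega}. The cleanest way to sidestep most of this is to show that the Koszul condition on $Q$ implies the hypothesis \ref{ch:small}(b) and quote it; then the only thing to check is the numerical inequality $\reg_{\wt Q}Q\le 1$ for Koszul $Q$ together with $J_1=0$. Part (2), by contrast, should be a fairly mechanical degree-count on the spectral sequence once the Koszulness of $M$ over $Q$ is fed in on both the $E^2$-page and the abutment.
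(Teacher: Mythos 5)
Your fallback for part (1) is incorrect, and your spectral-sequence sketch for part (2) cites the wrong bounds; the paper proves both parts by a short dual argument on Yoneda Ext-algebras that you do not consider.

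\textbf{Part (1).} The claim that $Q$ Koszul forces $\reg_{\wt Q}Q\le 1$ is false. Take $Q=K[x,y]/(x^2,y^2)$: this is Koszul (a complete intersection of quadrics), its minimal $\wt Q$-free resolution is the Koszul complex on $x^2,y^2$, so $\Tor 2{\wt Q}kQ$ sits in internal degree $4$ and $\reg_{\wt Q}Q=2$. Condition \ref{ch:small}(b) would then require $J_2=0$, which is certainly not a consequence of $J\subseteq(Q_{\irr})^2$; in the applications $J$ is typically generated in degree $2$. So you cannot reach smallness this way. The model-theoretic retract idea you flag as ``the main obstacle'' is essentially the content of \cite[4.3]{Av:small}, but it is delicate because the isomorphism in \ref{model:omega} is non-canonical. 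The paper instead dualizes: $\Tor{}{\vf}kk$ injective is equivalent to $\Ext{}{\vf}kk\colon\Ext{}Rkk\to\Ext{}Qkk$ surjective. The latter is a homomorphism of $K$-algebras under Yoneda products; $\Ext1{\vf}kk$ is bijective precisely because $J\subseteq(Q_{\irr})^2$ forces $\vf_1$ to be an isomorphism; and $Q$ Koszul says $\Ext{}Qkk$ is generated in cohomological degree $1$. Surjectivity follows.

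\textbf{Part (2).} Your bookkeeping is pointed in the wrong direction. The bound $j_2\le(c+1)q$ on $\topp qQR$ is an \emph{upper} bound and does not interact usefully with the diagonal constraint on the abutment. What a spectral-sequence proof would actually need are the \emph{lower} bounds: since $M$ is Koszul over $Q$ it is generated in degree $0$ (so $\Tor pRkM_{j_1}=0$ for $j_1<p$), and since $J\subseteq(Q_{\irr})^2$ the resolution of $R$ over $Q$ has $\Tor qQkR_{j_2}=0$ for $j_2\le q$ when $q\ge1$. Feeding these into \eqref{eq:ceE2} gives $\EH 2pq{(p+q)}=0$ for $q\ge1$; then the Koszul constraint on the abutment kills $\EH\infty pqj$ for $j\ne p+q$, so all $\EH\infty pq{*}$ with $q\ge1$ vanish and the edge map \eqref{eq:ceedge} is injective. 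This is a legitimate route, but it is not what you wrote, and it is not the route of the paper. The paper again dualizes: $\Ext{}{\vf}Mk$ is a map of bigraded left $\Ext{}Rkk$-modules; Koszulness of $M$ over $Q$ says $\Ext{}QMk$ is generated over $\Ext{}Qkk$ by $\Ext0QMk$; smallness (via part (1)) makes $\Ext{}{\vf}kk$ surjective, so $\Ext0QMk$ generates $\Ext{}QMk$ over $\Ext{}Rkk$ as well; and $\Ext0{\vf}Mk$ is visibly surjective. Hence $\Ext{}{\vf}Mk$ is surjective, equivalently $\Tor{}{\vf}kM$ is injective. That argument is cleaner and does not even require a degree analysis of the spectral sequence.
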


  \begin{proof}
Forming vector space duals, one sees that the injectivity of $\Tor{}{\vf}kM$ 
is equivalent to surjectivity of the homomorphism of bigraded $k$-vector spaces
  \[
\Ext{}{\vf}Mk\col\Ext{}{R}Mk\to\Ext{}{Q}Mk\,.
  \]

(1)  For $M=k$ the map above is a homomorphism of $K$-algebras, with
multiplication given by Yoneda products.  The map $\Ext1{\vf}kk_*$
is isomorphic to
  \[
\Hom R{\vf_1}k_*\col\Hom R{R_1}k_*\to\Hom Q{Q_1}k_*\,,
  \]
which is bijective as $J\subseteq(Q_{\irr})^2$ holds.  As $Q$ is
Koszul, the $k$ -algebra $\Ext{}{Q}kk$ is generated by $\Ext1{Q}kk$,
see \cite[Ch.\,2, \S1, Def.\,1]{PP}, so $\Ext{}{\vf}kk$ is surjective.

(2)  Yoneda products turn $\Ext{}{\vf}Mk$ into a homomorphism of
bigraded left modules over $\Ext{}{R}kk$, with this algebra acting
on $\Ext{0}{Q}Mk$ through $\Ext{}{\vf}kk$.   The bigraded module
$\Ext{}{Q}Mk$ is generated over $\Ext{}{Q}kk$ by $\Ext{0}{Q}Mk$, because
$M$ is Koszul over $Q$; see \cite[Ch.\,2, \S1, Def.\,2]{PP}.  Since $\vf$
is small, $\Ext0{\vf}kk_*$ is surjective, and hence $\Ext0{Q}Mk$
generates $\Ext{}{Q}Mk$ as an $\Ext{}{R}kk$-module as well.  The map
$\Ext0{\vf}Mk_*$ is surjective, because it is canonically isomorphic to
the identity map of $\Hom k{M_0}k_*$.  It follows that $\Ext{}{\vf}Mk$
is surjective.
   \end{proof}

 \begin{chunk}
  \begin{proof}[Proof of Main Theorem]
Recall that $Q$ is Koszul, $J$ is a non-zero ideal of $Q$ with $J_1=0$,
and $c=\slope_R{R_{\irr}}$.  Note that $\vf$ is small by Proposition
\ref{prop:koszul_small}(1).

(1)  The inequality $\slope_QR\le c+1$ was proved as part of Theorem
\ref{thm:small}.
 
One has $\topp{i}Qk=i$ for $1\le i<\pd_Qk+1$ by the Koszul hypothesis
on $Q$, and $\topp{i}QR\ge i+1$ for $1\le i<\pd_QR+1$ by the conditions
$J_1=0$.  The exact sequence
  \[
\Tor{i+1}Qkk\to\Tor iQk{R_{\irr}}\to\Tor iQkR\,.
  \]
of graded vector spaces, which holds for every $i\ge1$, therefore implies
  \[
\topp iQ{R_{\irr}}\le\max\{\topp{i+1}Qk,\topp iQR\}=\topp iQR\,,
  \]
and hence $\slope_QR_{\irr}\le\sup_{i\ges1}\{(\topp iQR-1)/i\}$.
Now Proposition \ref{thm:ceub} gives
  \[
c \le\max\left\{\slope_Q{R_{\irr}},\sup_{i\ges1}\left\{\frac{\topp iQR-1}i\right\}\right\}
\le\sup_{i\ges1}\left\{\frac{\topp iQR-1}i\right\}
 \le\slope_QR\,.
  \]
When $\pd_QR$ is finite the last inequality is strict, so one has
$c<\slope_QR$.

The inequalities in (2), (3), and (4) were proved as part of Corollary
\ref{cor:small}.  

Finally, assume that $Q$ is a standard graded polynomial ring and
$\reg_QR=cp$ holds with $p=\pd_QR$.  Theorem \ref{thm:small} then shows
that $(\Tor1QkR_{c+1})^p$ is not zero, and so $\Ker\vf$ needs at least
$p$ minimal generators of degree $c+1$.  As a bigraded $k$-algebra,
$\Tor{}QkR$ is isomorphic to the homology of the Koszul $E$ complex on
some $K$-basis of $Q_1$, so one also has $(\HH1E)^p\ne0$.  Now a theorem
of Wiebe, see \cite[2.3.15]{BH}, implies that $\Ker\vf$ is generated by 
a $Q$-regular sequence of $p$ elements.
  \end{proof}
 \end{chunk}

  \begin{proposition}  
    \label{prop:canonical}
For a Koszul $K$-algebra $Q$ and $R=Q/J$ with $J\subseteq(Q_{\irr})^2$
one has
  \[
2\le\slope_QR\le\slope_{\wt R}R\,,
  \]
where $R=\wt R/I_R$ is the canonical presentation.
Equalities hold when $R$ is Koszul.
  \end{proposition}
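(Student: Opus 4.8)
The plan is to deduce all three parts from results already in hand. First I would establish the lower bound $2\le\slope_QR$. The hypothesis $J\subseteq(Q_{\irr})^2$ gives $\topp1QR\ge2$, since a minimal generator of $J$ has internal degree at least $2$; as $J\ne0$ this forces $\topp1QR-\topp0QR\ge2$, hence $\slope_QR\ge2$. (When $J=0$ the statement is vacuous or trivial, so I assume $J\ne0$ throughout, matching the ambient hypotheses.)

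Next, for the middle inequality $\slope_QR\le\slope_{\wt R}R$, I would set up the factorization of the canonical presentation as in \ref{canonical}: since $Q$ is standard graded and $J\subseteq(Q_{\irr})^2$, one gets $\wt R\cong\wt Q\xra{}Q\xra{\vf}R$, where $\wt Q\to Q$ is the canonical presentation of $Q$ with kernel $I_Q\subseteq(\wt Q_{\irr})^2$. The idea is to apply the change-of-rings machinery of Section \ref{sec:rate} to the surjection $\wt R\cong\wt Q\to Q$ with module $M=R$. Because $Q$ is Koszul, Proposition \ref{prop:koszul_small}(1) tells us that $\wt Q\to Q$ is small; and $R$ is Koszul over $\wt R\cong\wt Q$ is \emph{not} automatic, so instead I would invoke Proposition \ref{thm:ceub} directly: it gives
\[
\slope_QR\le\max\Big\{\slope_{\wt Q}R,\ \sup_{i\ges1}\big\{(\topp i{\wt Q}Q-1)/i\big\}\Big\}.
\]
Since $Q$ is Koszul, $\topp i{\wt Q}Q=i$ for all $i$ in range, so the second term is $\sup_i\{(i-1)/i\}<1\le2\le\slope_QR$, making it harmless; and $\slope_{\wt Q}R=\slope_{\wt R}R$ under the isomorphism. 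Thus $\slope_QR\le\slope_{\wt R}R$. The hypothesis $J\ne Q J_1$ needed in Proposition \ref{thm:ceub} holds because $\Ker(\wt Q\to Q)=I_Q\subseteq(\wt Q_{\irr})^2$, so $(I_Q)_1=0$ and $I_Q\ne\wt Q\cdot(I_Q)_1=0$ forces the inequality once $I_Q\ne0$; if $Q$ itself is a polynomial ring then $I_Q=0$, $\wt R=Q$, and the inequality $\slope_QR\le\slope_{\wt R}R$ is an equality and there is nothing to prove.

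Finally, when $R$ is Koszul I claim both inequalities become equalities. For a Koszul standard graded algebra $R$ one has $\topp iRk=i$, hence $\topp i{\wt R}R=i+?$—more precisely, from the exact sequence $\Tor{i+1}{\wt R}kk\to\Tor i{\wt R}k{R_{\irr}}\to\Tor i{\wt R}kR$ and the relation $\reg_{\wt R}R\le\reg_Rk$-type bounds, one computes $\topp i{\wt R}R$; but the cleanest route is: $R$ Koszul means $\reg_{\wt R}R$ is controlled, and in fact the minimal $\wt R$-free resolution of $R$ is linear in the appropriate sense so that $\topp i{\wt R}R=i+1$ for $1\le i\le\pd_{\wt R}R$, giving $\slope_{\wt R}R=\sup_i\{(i+1-0)/i\}=2$. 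Since the same computation applies verbatim with $\wt R$ replaced by $Q$ (as $Q$ is Koszul and $J\subseteq(Q_{\irr})^2$ with $J_1=0$, the exact sequence argument from the proof of the Main Theorem part (1) gives $\topp iQR\le\topp i Q{R_{\irr}}$-type bounds, while $R$ Koszul over $R$ forces enough linearity), one gets $\slope_QR=2=\slope_{\wt R}R$. The main obstacle I anticipate is the equality case: verifying that $R$ Koszul actually forces $\topp iQR=i+1$ (equivalently $\reg_QR$-linearity of the resolution), rather than merely $\le$, will require combining the Koszul condition on $R$ with the change-of-rings edge maps \eqref{eq:ceedge}; one likely shows $\Tor iQkR_j=0$ for $j>i+1$ via the spectral sequence \eqref{eq:ceE2} with $M=k$, using $\Tor pRkk_{j_1}=0$ for $j_1\ne p$ and $\Tor qQkR_{j_2}$ vanishing in high degree, then extracts the needed nonvanishing in degree $i+1$ from $J\ne0$.
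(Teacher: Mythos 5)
Your overall strategy matches the paper's: factor the canonical presentation through $Q$, apply Proposition \ref{thm:ceub} to the surjection $\wt Q\to Q$ with $M=R$, and settle the equality case by bounding $\slope_{\wt R}R$ from above when $R$ is Koszul. The elementary observation $\topp1QR\ge2$ for the lower bound is also fine. However, two of your intermediate claims are false, and they share a root cause: conflating ``$Q$ is a Koszul algebra'' (meaning $\reg_Qk=0$) with ``$Q$ is a Koszul $\wt Q$-module'' (meaning $\Tor i{\wt Q}kQ_j=0$ for $j\ne i$). Your claim that $Q$ Koszul gives $\topp i{\wt Q}Q=i$ is already false at $i=1$ whenever $I_Q\ne0$: a minimal quadratic generator of $I_Q$ gives $\topp 1{\wt Q}Q=2$. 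What is true---and this is exactly what part (1) of the Main Theorem applied to $\wt Q\to Q$ delivers---is $\slope_{\wt Q}Q\le\Rate Q+1=2$, hence $\topp i{\wt Q}Q\le 2i$. This weaker bound still gives $\sup_{i\ges1}\{(\topp i{\wt Q}Q-1)/i\}<2\le\slope_{\wt Q}R$, so the max in Proposition \ref{thm:ceub} is attained by $\slope_{\wt Q}R=\slope_{\wt R}R$ and your conclusion survives; the intermediate ``$<1$'' is simply wrong.

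The second error, in the equality case, is the claim that $R$ Koszul forces $\topp i{\wt R}R=i+1$. This fails for $R=K[x,y]/(x^2,y^2)$, a Koszul complete intersection of quadrics: the Koszul complex on $x^2,y^2$ gives $\topp2{\wt R}R=4$, not $3$. The minimal resolution of a Koszul algebra over its polynomial cover is not linear; the correct statement is again only $\topp i{\wt R}R\le 2i$. The slope $\slope_{\wt R}R=2$ that you want should therefore be obtained exactly as the paper does it: part (1) of the Main Theorem applied to $\wt R\to R$ gives $\slope_{\wt R}R\le\Rate R+1=2$ when $R$ is Koszul, while $\topp1{\wt R}R=2$ gives the reverse inequality. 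Trying to derive a linearity of the $\wt R$-resolution of $R$ that is stronger than what holds is unnecessary and, if pressed further, would fail; the correct bound already closes the chain $2\le\slope_QR\le\slope_{\wt R}R\le2$.
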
 
  
 \begin{proof}
The canonical presentation factors as ${\wt R}\to Q\xra{\vf}R$; see 
\ref{canonical}.   Part (1) of the main theorem, applied to the homomorphism
${\wt R}\to Q$ and the $Q$-module $R$, gives inequalities
$2\le\slope_{\wt R}Q\le\Rate Q+1=2$, so Proposition \ref{thm:ceub} yields
  \[
\slope_QR\le\max\{\slope_{\wt R}R,\slope_{\wt R}Q\}
=\max\{\slope_{\wt R}R,2\}=\slope_{\wt R}R\,.
  \]
When $R$ is Koszul, the computation above gives 
$2\le\slope_{\wt R}R\le\Rate R+1=2$.
 \end{proof}

The last assertion of Proposition \ref{prop:canonical} does not
admit a converse.  To demonstrate this we appeal to a family of 
graded algebras constructed by Roos \cite{Ro}.
Recall that the formal power series
$H_{M}(s)=\sum_{j\in\BN}\rank_KM_js^j$ in $\BZ[\![s]\!]$ is
called the \emph{Hilbert series} of $M$, and the formal Laurent
series $P^{R}_k(s,t)=\sum_{i\in\BN,j\in\BZ}\beta_{i,j}^R(M)\,s^jt^i $ in
$\BZ[s^{\pm1}][\![t]\!]$, where $\beta_{i,j}^R(M)=\rank_k\Tor iRkM_j$,
is known as its \emph{graded Poincar\'e series}.

  \begin{chunk}
    \label{ch:roos}
Let $P=K[x_1,x_2,x_3,x_4,x_5,x_6]$ be a polynomial ring.

For each integer $a\ge2$ set $R{(a)}=P/I{(a)}$, where $I{(a)}$ is the ideal
  \[
\big(\{x_i^2\}_{1\le i\le 6}\,,\,\{x_{i}x_{i+1}\}_{1\le i\le 5}\,,\,
x_1x_3+ax_3x_6-x_4x_6\,,\,x_1x_4+x_3x_6+(a-2)x_4x_6\big)\,.
  \]
When the characteristic of $K$ is zero, Roos \cite[Thm.\,1$'$]{Ro} proves
the equalities
  \[
H_{R(a)}(s)=1+6s+8s^2
  \quad\text{and}\quad
P^{R{(a)}}_k(s,t)=\frac1{H_{R(a)}(-st)-(st)^{a+1}(s+st)}\,.
  \]
  \end{chunk}

  \begin{example}
For each $a\ge2$ the graded $K$-algebra $R{(a)}$ from \ref{ch:roos} satisfies
  \[
\slope_{P}R{(a)}-1=1<1+(1/a)\le\Rate R{(a)}\le1+(2/a)\,.
  \]

Indeed, one has $\topp 1P{R(n)}=2$ because $I(a)$ is generated by
quadrics.  The isomorphism $\Tor iPk{R(a)}_j\simeq\HH i{E\otimes_P{R(a)}}_j$, 
where $E$ denotes the Koszul complex on some basis of
${P}_1$, and the equalities ${R(a)}_j=0$ for $j\ge3$ imply $\topp
iP{R(a)}\le i+2$ for $2\le i\le 6$.  Comparing the numbers $\topp
iP{R(a)}/i$, one gets $\slope_{P}R{(a)}=2$.

Following \cite{ABH}, for each
$f(s,t)=\sum_{i,j\ges0}b_{i,j}s^jt^i\in\BR[s][\![t]\!]$ we set 
  \[
\rate(f(s,t))=\sup_{i,j}\{j/i\mid i\ge1\text{ and }b_{i,j}\ne0\}\,.
  \]
Writing $h(s,t)=6-8st+s^{a+1}t^{a}+s^{a+1}t^{a+1}$, we obtain the
expression
  \[
P^{R(a)}_{R(a)_{\irr}}(s,t)=\frac{P^{R(a)}_k(s,t)-1}{t}
=\frac{sh(s,t)}{1-(st)h(s,t)}
=\sum_{i\ges1}s^it^{i-1}h(s,t)^i\,.
  \]
The momomial $s^jt^i$ with least $i\ge1$ and largest $j$, which appears
with a non-zero coefficient in the sum on the right, is $s^{a+2}t^{a}$.  This
gives the first inequality below:
  \begin{align*}
\frac{a+1}a
&\le\slope_{R(a)}(R(a)_{\irr})
=\rate\left(\frac{s\cdot h(s,t)}{1-(st)h(s,t)}\right)\\
&\le\max\big\{\rate(s\cdot h(s,t))\,,\rate(1-(st)h(s,t))\big\}\\
&=\max\left\{\frac{a+2}a\,,\frac{a+2}{a+1}\right\}
=\frac{a+2}a\,.
  \end{align*}
The second inequality comes from \cite[1.1]{ABH}.  The desired 
inequalities follow.
  \end{example}

\section{Slopes and  Gr\"obner bases}

Let $R$ be a standard graded $K$-algebra
and $R={\wt R}/I_R$ its canonical presentation.

Let $T(R)$ denote the set of all term orders on all $K$-bases of ${\wt R}_1$.
Letting $\ini_\tau(I_R)$ denote the initial ideal corresponding to $\tau\in T$, 
Eisenbud, Reeves, and Totaro \cite{ERT} set 
   \[
\Delta(R)=\inf_{\tau\in T(R)}\{ t^{\wt R}_1({\wt R}/\ini_\tau(I_R)) \}\,.
  \]
In words: $\Delta(R)$ is the smallest number $a$ such that $I_R$ has a 
Gr\"obner basis of elements of degree $\leq a$ with respect to a 
term order on some coordinate system.  Now we set
  \[
\Delta^{\ell}(R)=\inf\{ \Delta(Q) \}\,,
  \]
where $Q$ ranges over the set of all graded $K$-algebras satisfying 
$Q/L\simeq R$ for some ideal $L$ generated by a $Q$-regular sequence 
of elements of degree $1$.  

  \begin{proposition}  
   \label{prop:a1}
When $R$ is not a polynomial ring the following inequalities hold:
  \[
2\le\Rate R+1\le\Delta^{\ell}(R)\,.
    \qedhere
  \]
  \end{proposition}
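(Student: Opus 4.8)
The plan is to prove the two inequalities separately, using the Koszul-style machinery developed in the earlier sections.

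\medskip

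\noindent\textbf{The lower bound $2\le\Rate R+1$.} This is equivalent to $\Rate R\ge1$, which is noted in the introduction as a general fact (with equality exactly when $R$ is Koszul); here it is automatic since $R$ is not a polynomial ring, so $I_R\ne0$ and $R$ cannot be Koszul in the trivial way — more precisely, $\Rate R=\slope_RR_{\irr}\ge1$ always holds, and it suffices to quote that. So the content is really in the second inequality.

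\medskip

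\noindent\textbf{The upper bound $\Rate R+1\le\Delta^\ell(R)$.} First I would reduce to the bound $\Rate R+1\le\Delta(R)$, since $\Delta^\ell(R)$ is an infimum of $\Delta(Q)$ over algebras $Q$ with $Q/L\cong R$ for $L$ generated by a $Q$-regular sequence of linear forms. Passing from $R$ to such a $Q$: dividing by a regular sequence of degree-$1$ elements does not change $\Rate$ (one can invoke Proposition~\ref{thm:reg}(2) with $\deg(f)=1$, applied $M=R_{\irr}$ or more carefully to iterate, noting that $\slope$ is unchanged when $\deg(f)=1$ since then $\deg(f)/2<1\le\slope$), so it is enough to show $\Rate Q+1\le\Delta(Q)$ for each such $Q$; taking the infimum over $Q$ then gives the claim. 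So fix such a $Q$ (which is again not a polynomial ring) and write $a=\Delta(Q)$: there is a term order $\tau$ on some coordinate system of $\wt Q_1$ with $\topp1{\wt Q}(\wt Q/\ini_\tau(I_Q))\le a$, i.e. $\ini_\tau(I_Q)$ is generated in degrees $\le a$.

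\medskip

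\noindent\textbf{The Gröbner-deformation step.} The key point is that passing to the initial ideal only increases the relevant homological degrees. Set $Q'=\wt Q/\ini_\tau(I_Q)$. A standard flat-degeneration argument (upper-semicontinuity of graded Betti numbers under Gröbner deformation) gives $\topp i{\wt Q}Q\le\topp i{\wt Q}{Q'}$ for all $i$, and in particular $\reg_{\wt Q}Q\le\reg_{\wt Q}{Q'}$. Now $Q'$ has a monomial defining ideal generated in degrees $\le a$; for such an algebra one has $\slope_{\wt Q}Q'_{\irr}\le a-1$ — this is exactly the kind of statement underlying $\Rate(\wt Q/M)\le a$ for $M$ monomial generated in degrees $\le a$, which follows from the Taylor complex / \cite{ABH}-type estimate (the resolution of a monomial ideal generated in degrees $\le a$ has $t$-degrees bounded by $a$ times homological degree). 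Combining with Proposition~\ref{thm:ceub} applied to $\wt Q\to Q$ with $M=Q_{\irr}$ — which gives $\slope_QQ_{\irr}\le\max\{\slope_{\wt Q}Q_{\irr},\,\sup_i\{(\topp i{\wt Q}Q-1)/i\}\}$ — I expect to land on $\Rate Q=\slope_QQ_{\irr}\le a-1$, hence $\Rate Q+1\le a=\Delta(Q)$.

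\medskip

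\noindent\textbf{Main obstacle.} The delicate point is bookkeeping the internal degrees through the change-of-rings inequality of Proposition~\ref{thm:ceub}: I need $\sup_{i\ge1}\{(\topp i{\wt Q}Q-1)/i\}\le a-1$, which requires $\topp i{\wt Q}Q\le(a-1)i+1$, i.e. a regularity-type bound on $Q$ over the polynomial ring $\wt Q$ coming only from the degree bound $a$ on the Gröbner basis — and this is precisely the \cite{ERT}–\cite{ABH} content, that $\topp i{\wt Q}(\wt Q/I)\le\Delta\cdot i$ when the defining ideal is monomial generated in degrees $\le\Delta$, transported back to $Q$ by semicontinuity. Making that quantitative bound match up so that the two terms in the maximum of Proposition~\ref{thm:ceub} are both $\le a-1$ is the crux; everything else is routine.
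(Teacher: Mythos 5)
There is a genuine gap at the heart of your argument, precisely at what you flag as the ``main obstacle.'' The paper does not attempt to derive $\Rate Q\le\Delta(Q)-1$ from the Taylor resolution combined with Proposition~\ref{thm:ceub}; it simply cites \cite[Prop.~3]{ERT}, because the route you propose cannot produce the crucial ``$-1$.'' The Taylor complex of a monomial ideal generated in degrees $\le a$ gives $\topp i{\wt Q}{\wt Q/\ini_\tau(I_Q)}\le a\cdot i$, and this is sharp (take $(x_1^a,\dots,x_p^a)$); it does \emph{not} give the bound $\topp i{\wt Q}Q\le (a-1)i+1$ that you need, and indeed that sharper bound is false in general. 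Feeding $\topp i{\wt Q}Q\le ai$ and $\topp i{\wt Q}{Q_{\irr}}\le ai$ into the inequality of Proposition~\ref{thm:ceub} yields only $\Rate Q\le a-1/\pd_{\wt Q}Q$, which exceeds $a-1$ as soon as $\pd_{\wt Q}Q\ge2$. Concretely, for $Q=K[x,y]/(x^a,y^a)$ one has $\Delta(Q)=a$ and $\Rate Q=a/2\le a-1$, while the ceub estimate with the Taylor input evaluates to $a-1/2$; so this change-of-rings bound is strictly too weak to recover the ERT inequality. Your invocation of ``the \cite{ERT}--\cite{ABH} content'' at exactly this point is therefore circular: what is needed \emph{is} \cite[Prop.~3]{ERT}, and it is not a consequence of the Taylor bound plus \ref{thm:ceub}.

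There is also a smaller lacuna in the reduction to a single lift $Q$. Proposition~\ref{thm:reg}(2) applied to $M=R_{\irr}$ (with $\deg f=1$) gives $\slope_RR_{\irr}\le\slope_QR_{\irr}$; but $\slope_QR_{\irr}$ is not $\Rate Q$, so you have not yet compared $\Rate R$ with $\Rate Q$. The paper instead uses Proposition~\ref{thm:reg}(1) (where the linearity of $f$ makes the ``equality'' case automatic, since $f\notin(Q_{\irr})^2$) to obtain $\slope_RR_{\irr}=\slope_QR_{\irr}$, and then a separate argument with the exact sequence $0\to Q(-1)\to Q_{\irr}\to R_{\irr}\to0$ to conclude $\slope_QR_{\irr}=\slope_QQ_{\irr}=\Rate Q$. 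That extra step (with the exact sequence) is missing from your sketch, though it is easy to supply. The lower bound $\Rate R\ge1$ is fine as you state it.
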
 
  
 \begin{proof}
For $R\cong Q/(l)$ with $l$ a non-zero-divisor in $Q_1$, one has  
a chain
  \[
\Rate R=\slope_{R}R_{\irr}=\slope_{Q}R_{\irr}
=\slope_{Q}Q_{\irr}=\Rate Q\le\Delta(Q)-1\,.
  \]
where the first and third equalities hold by definition, the second one 
by Proposition \ref{thm:reg}(1), and the last one from the exact 
sequence $0\to Q(-1)\to Q_{\irr}\to R_{\irr}\to 0$; the inequality, 
announced without proof by Backelin \cite[Claim, p.\,98]{Ba}, is 
established in \cite[Prop.~3]{ERT}.  The second inequality
in the proposition follows.
 \end{proof}

Combining the main theorem and the preceding proposition, one 
obtains:

  \begin{corollary}
    \label{cor:taylor}
The following inequalities hold.
 \begin{enumerate}[\rm(1)]
  \item
$\slope_{\wt R}R\le\Delta^{\ell}(R)$.
  \item
$t_i^{\wt R}(R)< \Delta^{\ell}(R)\cdot i$ for all $i>(\rank_KR_1-\dim R)$.
  \item
$\reg_{\wt R} R\leq(\Delta^{\ell}(R)-1)\cdot(\rank_KR_1-\depth R)$.
  \qed
  \end{enumerate}
  \end{corollary}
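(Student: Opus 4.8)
The plan is to deduce all three inequalities from the Main Theorem applied with $Q=\wt R$ and $J=I_R$, together with Proposition~\ref{prop:a1} and the behavior of the invariants under adjoining a $Q$-regular sequence of linear forms. Note first that since $R$ is not a polynomial ring, $I_R\ne0$, and by construction $I_R\subseteq(\wt R_{\irr})^2$, so the hypotheses of the Main Theorem are met with $c=\Rate R$. The key auxiliary observation, to be recorded at the start, is that all the quantities involved are unchanged (or controlled in the right direction) when one passes from $R$ to a presentation $Q/L$ with $L$ generated by a $Q$-regular sequence of linear forms: indeed $\slope$, $\reg$, and the relevant $t_i$ are insensitive to such "polynomial" extensions because dividing by a linear non-zero-divisor does not alter the minimal free resolution up to tensoring with a Koszul complex on linear forms, and $\rank_K R_1$, $\dim R$, $\depth R$ all shift by the number of such forms in a way that keeps the differences $\rank_KR_1-\dim R$ and $\rank_KR_1-\depth R$ constant. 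This is the same mechanism already used in the proof of Proposition~\ref{prop:a1}.

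For part (1): by Proposition~\ref{prop:canonical} one has $\slope_{\wt R}R\le\Rate R+1$ whenever $R$ is not Koszul, and $\slope_{\wt R}R=2$ when it is; in either case $\slope_{\wt R}R\le\Rate R+1\le\Delta^{\ell}(R)$, the last step by Proposition~\ref{prop:a1}. Wait---I should be careful: Proposition~\ref{prop:canonical} only gives $\slope_{\wt R}R\le\slope_{\wt R}R$ trivially; the bound I actually want is $\slope_{\wt R}R\le\Rate R+1$, which is part~(1) of the Main Theorem applied to the canonical presentation $\wt R\to R$ (with $c=\Rate R$). So part~(1) of the corollary is immediate: $\slope_{\wt R}R\le c+1\le\Delta^{\ell}(R)$.

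For part (2): apply part~(3) of the Main Theorem to $\wt R\to R$. It gives $t_i^{\wt R}(R)<(c+1)\cdot i$ for all $i>\dim\wt R-\dim R=\rank_KR_1-\dim R$, provided $\pd_{\wt R}R$ is finite---which holds automatically since $\wt R$ is a polynomial ring. Then bound $c+1\le\Delta^{\ell}(R)$ by Proposition~\ref{prop:a1} to obtain $t_i^{\wt R}(R)<\Delta^{\ell}(R)\cdot i$ in the stated range. For part (3): part~(4) of the Main Theorem gives $\reg_{\wt R}R\le c\cdot\pd_{\wt R}R$, and by the Auslander--Buchsbaum formula $\pd_{\wt R}R=\rank_KR_1-\depth R$; combining with $c\le\Delta^{\ell}(R)-1$ yields the claim.

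The main subtlety---and the only place deserving real care---is the passage from $\Delta^{\ell}(R)$ (an infimum over linear-regular-sequence extensions $Q$) to the three left-hand invariants, all of which are computed over $\wt R$ for $R$ itself. One must check that for any such $Q$ with $Q/L\simeq R$, we have $\slope_{\wt R}R=\slope_{\wt Q}Q$ (and similarly for $\reg$, and for $t_i$ with the index range adjusted by $\rank_KR_1-\dim R$), so that taking the infimum over $Q$ of $\Delta(Q)$ on the right is compatible with the fixed left-hand sides. Since $L$ is generated by $\wt Q$-regular linear forms, Proposition~\ref{thm:reg}(1) (or rather its iteration) and the fact that the canonical presentations are related by $\wt R\cong \wt Q/(\text{linear regular sequence})$ give exactly this, the differences $\rank-\dim$ and $\rank-\depth$ being preserved. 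Once this bookkeeping is in place, applying the Main Theorem to each $Q$ and passing to the infimum finishes all three parts simultaneously.
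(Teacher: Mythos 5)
Your core argument (the three labelled ``For part~(1/2/3)'' paragraphs) is correct and is exactly the paper's intended proof: apply the Main Theorem to the canonical presentation $\wt R\to R$ with $c=\Rate R$, invoke Auslander--Buchsbaum for $\pd_{\wt R}R=\rank_KR_1-\depth R$ (and $\dim\wt R=\rank_KR_1$ for part~(2)), and then substitute $c+1\le\Delta^{\ell}(R)$ from Proposition~\ref{prop:a1}. The ``main subtlety'' you raise in the final paragraph is, however, a false alarm: you do not need to verify $\slope_{\wt R}R=\slope_{\wt Q}Q$ (nor the analogues for $\reg$ and $t_i$) to handle the infimum defining $\Delta^{\ell}(R)$, because Proposition~\ref{prop:a1} already packages that infimum into the single inequality $\Rate R+1\le\Delta^{\ell}(R)$, while all three left-hand sides are controlled by $c=\Rate R$ through the Main Theorem alone. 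The $Q$-by-$Q$ comparison you describe is the mechanism of the \emph{alternative} argument sketched in~\ref{rem:taylor}, not a gap in the route you actually took.
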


The research reported in this paper was prompted by the inequalities
above, which were initially obtained by a very different argument; we
proceed to sketch it.

  \begin{chunk}
    \label{rem:taylor}
For any isomorphism $R\simeq Q/L$, with $L$ generated by a regular
sequence of linear forms, and for each $\tau\in T(Q)$ and every pair of 
integers $(i,j)$ one has:
  \begin{equation}
    \label{eq:betti}
\beta_{i,j}^{\wt R}(R)=\beta_{i,j}^{\wt Q}(Q)\le
\beta_{i,j}^{\wt Q}(\wt Q/\ini_\tau(I_Q))\,;
  \end{equation}
see, for instance, \cite[3.13]{BC}.  The Taylor resolution 
of the monomial ideal $\ini_\tau(I_Q)$, see \cite[\S5]{Fr}, 
yields inequalities $\topp i{\wt Q}{{\wt Q}/\ini_\tau(I_Q)}\leq
\topp 1{\wt Q}{{\wt Q}/\ini_\tau(I_Q)}\cdot i$, which are strict for
$i>\rank_KQ_1-\dim Q$.  From these observations one obtains:
  \[
\slope_{\wt R}R=\slope_{\wt Q}{Q}=\sup_{i\ges1}\{\topp i{\wt Q}Q/i\} 
\le\inf_{\tau\in T(Q)}\{\topp 1{\wt Q}{{\wt Q}/\ini_\tau(I_Q)}\}=\Delta(Q)\,.
  \]
These inequalities imply part (1) of Corollary \ref{cor:taylor};
part (3) is a formal consequence.
  \end{chunk}

In \cite{Co}, algebras $R$ satisfying $\Delta(R)=2$ are called 
\emph{G-quadratic}, and those with $\Delta^{\ell}(R)=2$ are called 
\emph{LG-quadratic}.  A G-quadratic algebra is LG-quadratic by 
definition, and an LG-quadratic one is Koszul, see Proposition \ref{prop:a1}.

The first one of the preceding implications is not invertible:  By an
observation of Caviglia, see \cite[1.4]{Co}, complete intersections
of quadrics are LG-quadratic, while it is known that not all of them are 
G-quadratic, see \cite{ERT}.  Which leaves us with:

  \begin{question}
Is every Koszul algebra LG-quadratic?
  \end{question}

The \emph{Betti numbers} 
$\beta^{\wt R}_i(R)=\sum_{j\in\BZ}\rank_k\Tor i{\wt R}kR_j$ might help 
separate the two notions.  Indeed, when $R$ is LG-quadratic one has 
$R\cong Q/L$ and $Q=\wt Q/I_Q$, where $Q$ is a standard graded 
$K$-algebra, $L$ is an ideal generated by a $Q$-regular sequence of 
linear forms, and the initial ideal $\ini_\tau(I_Q)$ for some $\tau\in T(Q)$ 
is generated by quadrics.  As a consequence, one has
$\beta_1^{\wt Q}(Q)=\beta_1^{\wt Q}(\wt Q/\ini_\tau(I_Q))$, so we
obtain
 \begin{align*}
\beta_i^{\wt R}(R)
\le\beta_i^{\wt Q}(\wt Q/\ini_\tau(I_Q))
\le\binom{\beta^{\wt Q}_1(\wt Q/\ini_\tau(I_Q))}i
=\binom{\beta^{\wt R}_1(R)}i\,,
  \end{align*}
with inequalities coming from \eqref{eq:betti} and the Taylor resolution.  
Thus, we ask:

  \begin{question}
If $R$ is a Koszul algebra, does
$\beta^{\wt R}_i(R)\le\displaystyle\binom{\beta^{\wt R}_1(R)}i$ 
hold for every $i$?
  \end{question}

 \end{document}